\numberwithin{equation}{section}
\newtheorem{theorem}{Theorem}[section]
\newtheorem{corollary}{Corollary}[section]
\newtheorem{remark}{Remark}
\newcommand{\BS}{\boldsymbol}
\newcommand{\rmnum}[1]{\romannumeral #1}
\newcommand{\Rmnum}[1]{\expandafter\@slowromancap\romannumeral #1@}
\newcommand{\trp}{{\sf T}}
\journal{ }
\def\ps@pprintTitle{%
   \let\@oddhead\@empty
   \let\@evenhead\@empty
   \def\@oddfoot{\reset@font\hfil\thepage\hfil}
   \let\@evenfoot\@oddfoot
}
\begin{document}

\begin{frontmatter}
\author[]{ Osama Idais\corref{cor1}}
\cortext[cor1]{Corresponding author}
\ead{osama.idais@ovgu.de}
\author[]{Rainer Schwabe}
\ead{rainer.schwabe@ovgu.de}
\address{\small Institute for Mathematical Stochastics,  Otto-von-Guericke-University  Magdeburg,\\ \small PF 4120, D-39016 Magdeburg, Germany}

 \title{Analytic solutions for locally optimal designs \\  for gamma models \\ having linear predictor without intercept}

\begin{abstract}
The gamma model is  a generalized linear model for gamma-distributed outcomes. The model is widely applied in psychology, ecology or medicine. In this paper we focus on gamma models  having a linear predictor without  intercept. For a specific scenario sets of locally D- and A-optimal designs are to be developed.  Recently, \cite{Gaffke2018}  established a complete class and an essentially complete class of designs  for gamma models to obtain locally D-optimal designs. However to extend this approach to gamma model without an intercept term is complicated. To solve that further techniques have to be developed in the current work.   Further, by a suitable transformation between gamma models with and without intercept optimality results may be transferred from one model to the other. Additionally by means of The General Equivalence Theorem optimality can be characterized for multiple regression by a system of polynomial inequalities which can be solved  analytically or by computer algebra. By this necessary and sufficient conditions on the parameter values can be obtained for the local D-optimality of particular designs. The robustness of the derived designs with respect to misspecifications of the initial parameter values is examined by means of their local D-efficiencies.
\end{abstract}

\begin{keyword}
generalized linear model\sep optimal design\sep models without intercept\sep complete class \sep interaction.

\end{keyword}

\end{frontmatter}

\section{Introduction}
\label{}

The gamma model is employed for  outcomes that are non-negative, continuous, skewed and heteroscedastic specifically, when the variances are proportional to the square of the means. The gamma model with its canonical link  ( reciprocal ) is appropriate  for many real life data. In ecology and forestry,  \cite{10.1093/forestscience/55.4.310} mentioned that gamma models offers a great potential for many forestry applications and they used gamma models to analyze plant competition.  In medical context,  \cite{Grover2013AnAO} fitted a gamma model  with duration of diabetes as the response variable and predictors as the rate of rise in serum creatinine (SrCr) and number of successes (number of times SrCr values exceed its normal range (1.4 mg/dl)). For a study about air pollution, \cite{Kurtolu2016} employed  a gamma model to analyze nitrogen dioxide concentrations considering some weather factors ( see also  \cite{Chatterjee:1988:SAL:59088}, Section 8.7). In psychological studies, recently,  \cite{Ng2017}  used a gamma model for modeling the relationship between negative automatic thoughts (NAT) and socially prescribed perfectionism (SPP).\par
  Although, the canonical link is frequently employed in the gamma model but there is always  a doubt about the suitable link function for  outcomes. Therefore, a class of link functions might be employed. The common alternative links mostly come  from the Box-Cox family and the power link family (see \cite{atkinson2015designs}). In fact,  the family of power link  functions includes the canonical link therefore it is  a favorite choice for employment in this paper. \par
In the  theory of optimal designs, the information matrix of a generalized linear model depends on the model parameters through the intensity function. Locally optimal designs can be derived through maximizing a specific optimality criterion at certain values of the parameters.  However, although the gamma model is used in many applications, but it has no considerable attention for optimal designs.  Geometric approaches were employed to derive locally D-optimal designs for a gamma model with single factor ( see  \cite{10.2307/2346142}),  with two factors without intercept  (see \cite{Burridge1992}) and for multiple factors (see \cite{10.2307/2336960}). Some of those results were highlighted on by \cite{atkinson2015designs}. Recently, in \cite{Gaffke2018} we provided analytic solutions for optimal designs for gamma models. A complete class and essentially complete class of designs were established under certain assumptions. Therefore, the complexity of deriving optimal designs is reduced and one can only look for the optimal design in those classes. \par

In the present paper gamma models without intercept are considered. Absence of the intercept term yields a difficulty in deriving D- and A-optimal designs. Our main goal is developing various approaches to obtain, mostly, locally D-optimal designs.  This paper is organized as follows. In section 2, the proposed model, the information matrix and the locally optimal design are presented. In section 3, locally D- and A-optimal designs are derived. In section 4,  a two-factor model with interaction is considered for which locally D-optimal designs are derived. The performance of some  derived D-optimal designs are examined in Section 5. Finally, a brief discussion and  conclusions are given in Section 6. 

\section{Model, information and designs}

Let $y_1, ... ,y_n$ be independent gamma-distributed response variables for $n$ experimental units, where the density is given by
\begin{equation}
P(y_i;\kappa,\lambda_i)=\frac{\lambda_i^{\kappa}}{\Gamma\left(\kappa\right)} y_i^{\kappa-1}e^{-\lambda_i y_i }\,\,\,,\kappa,\, \lambda_i,\,y_i  >0,\,\,   (1\le i\le n), \label{gama}
\end{equation}
The shape parameter $\kappa$ of the gamma distribution  is the same for all $y_i$ but the expectations $\mu_i=E(y_i)$ depend on the values $\BS{x}_i$
of a covariate $\BS{x}$. The canonical link obtained from a gamma distribution (\ref{gama}) is reciprocal (inverse) 
\begin{equation*}
 \eta_i=\kappa/\mu_i,\ \ \mbox{where } \ \ \eta_i=\BS{f}^\trp(\BS{x}_i)\BS{\beta},\,\, \ (1\le i\le n),
\end{equation*}
where $\BS{f}=(f_1,\ldots,f_p)^\trp$ is a given $\mathbb{R}^p$-valued function on the experimental region ${\cal X}\subset \mathbb{R}^\nu,\,\nu\ge 1$ with linearly independent component functions $f_1,\ldots,f_p$, and $\BS{\beta}\in\mathbb{R}^p$ is a parameter vector (see \citet{mccullagh1989generalized}, Section 2.2.4).  Here,  the mean-variance function is $v(\mu)=\mu^2$ and the variance of a gamma distribution is thus given by $\mathrm{var}(y)=\kappa^{-1}\mu^2$ with shape parameter $\kappa>0$. Therefore,  the intensity function  at a point $\BS{x}\in \mathcal{X}$ (see \citet{atkinson2015designs})  is given by
 \begin{equation}
 u(\BS{x},\BS{\beta})=\Bigl(\mathrm{var}(y)\,\Big(\frac{{\rm d}\eta}{{\rm d}\mu}\Big)^2\Bigr)^{-1}=\kappa\bigl(\BS{f}^\trp(\BS{x})\BS{\beta}\bigr)^{-2}.
  \end{equation}
Practically,  there are various link functions that are considered to fit gamma observations.   The power link family which is considered throughout  presents the class of link functions  as in \citet{10.2307/2336960}, see also \citet{atkinson2015designs}, Section 2.5, 
\begin{equation}
\eta_i=\mu_i^\rho,\ \ \mbox{where }\ \ \eta_i=\BS{f}^\trp(\BS{x}_i)\BS{\beta}, \ (1\le i\le n).\label{eq4.1}
\end{equation}
The exponent $\rho$ of the power link function is a given nonzero real number. The intensity function under that family reads as
\begin{equation}
u_0(\BS{x},\BS{\beta})=\kappa \rho^{-2}\bigl(\BS{f}^\trp(\BS{x})\BS{\beta}\bigr)^{-2}.\label{eq4.3}
\end{equation}
Gamma-distributed responses are continuous and non-negative and therefore for a given experimental region ${\cal X}$  we assume throughout  that the parameter vector $\BS{\beta}$ satisfies
\begin{equation}
\BS{f}^\trp(\BS{x})\,\BS{\beta}>0\ \mbox{ for all }\ \BS{x}\in\mathcal{X}. \label{eq2-2}
\end{equation}
The Fisher information matrix for a single observation at a point $\BS{x}\in{\cal X}$
under parameter vector $\BS{\beta}$ is given by \
$u_0(\BS{x},\BS{\beta})\,\BS{f}(\BS{x})\,\BS{f}^\trp(\BS{x})$. 
Note that the positive factor $\kappa \rho^{-2}$ is the same for all  $\BS{x}$ and $\BS{\beta}$ and will not affect any design consideration below. We will ignore that factor and consider a normalized version of the Fisher information matrix at $\BS{x}$ and $\BS{\beta}$, 
\begin{equation}
\BS{M}(\BS{x},\BS{\beta})=\bigl(\BS{f}^\trp(\BS{x})\BS{\beta}\bigr)^{-2}\,\BS{f}(\BS{x})\,\BS{f}^\trp(\BS{x}).\label{eq4.4}
\end{equation}        
 We shall make use of approximate designs with finite support on the experimental region $\mathcal{X}$. The approximate design $\xi$ on $\mathcal{X}$ is represented as  
\begin{equation}
\xi=\left \{  \begin{array}{cccc}   \BS{x}_1 &\BS{x}_2&\dots&\BS{x}_m  \\  
 \omega_1 & \omega_2 &\dots&\omega_m \end{array}\right\}, \label{eq2-4}
\end{equation}
where $m\in\mathbb{N}$, $\BS{x}_1,\BS{x}_2, \dots,\BS{x}_m\in\mathcal{X}$ are pairwise distinct points  
and $\omega_1, \omega_2, \dots, \omega_m>0$ with $\sum_{i=1}^{m} \omega_i=1$. 
The set ${\rm supp}(\xi)=\{\BS{x}_1,\BS{x}_2, \dots,\BS{x}_m\}$ is called the support of $\xi$ and 
$\omega_1,\ldots,\omega_m$ are called the weights of $\xi$ (
see \cite{silvey1980optimal}, p.15).  A design $\xi$  is minimally supported if the number of support points is equal to the number of model parameters (i.e., $
m = p$). A minmal-support design which is also called a saturated design will appear frequently  in the current work.  The information matrix of a design $\xi$  at a parameter point $\BS{\beta}$ is defined by
\begin{eqnarray}
\BS{M}(\xi, \BS{\beta})=\sum_{i=1}^{m}\omega_i \BS{M}(\BS{x}_i, \BS{\beta}).\label{eq2-5}
\end{eqnarray}
Another representation of the information matrix (\ref{eq2-5}) can be considered by defining the $m \times p$ design matrix $\BS{F}=[\BS{f}(\BS{\BS{x}}_1),\dots,\BS{f}(\BS{\BS{x}}_m)]^\trp$ and the $m\times m$ weight matrix $\BS{V}=\mathrm{diag}(\omega_iu(\BS{x}_i,\BS{\beta}))_{i=1}^{m}$ and hence, $\BS{M}(\xi, \BS{\beta})=\BS{F}^\trp\BS{V}\BS{F}$. \par
A locally optimal design minimizes a convex criterion function of the information matrix at a given  parameter point $\BS{\beta}$. Denote by ''$\det$'' and ''${\rm tr}$'' the determinant and the trace of a matrix, respectively. We will employ the popular D-criterion and the A-criterion.  More precisely, a design $\xi^*$ is said to be locally D-optimal (at $\BS{\beta}$) if its information matrix $\BS{M}(\xi^*, \BS{\beta})$ at $\BS{\beta}$ is nonsingular and  $ \det\bigl(\BS{M}^{-1}(\xi^*, \BS{\beta})\bigr)=\min_\xi \det\bigl(\BS{M}^{-1}(\xi, \BS{\beta})\bigr)$ where the minimum on the r.h.s. is taken over all designs $\xi$ whose information matrix at $\BS{\beta}$ is nonsingular. Similarly, a design $\xi^*$ is said to be locally A-optimal (at $\BS{\beta}$) if its information matrix at $\BS{\beta}$ is nonsingular and  ${\rm tr}\bigl(\BS{M}^{-1}(\xi^*, \BS{\beta})\bigr)=\min_\xi {\rm tr}\bigl(\BS{M}^{-1}(\xi, \BS{\beta})\bigr)$ where, again, the minimum is taken over all designs $\xi$ whose information matrix at $\BS{\beta}$ is nonsingular.
\begin{remark}  \rm
It is worthwhile mentioning that the set of designs for which the information matrix is nonsingular does not depend on $\BS{\beta}$ (when $u(\BS{x},\BS{\beta})$ is strictly positive). In particular it is just the set of designs for which the information matrix is nonsingular in the corresponding ordinary regression model (ignoring the intensity $u(\BS{x},\BS{\beta})$).  That is the singularity depends on the support points of a design $\xi$ because its information matrix  $\BS{M}(\xi,\BS{\beta})=\BS{F}^\trp\BS{V}\BS{F}$ is full rank if and only if  $\BS{F}$ is full rank. 
\end{remark}

\begin{remark} \label{rem-1}\rm
If the experimental region is a compact set and the functions $\BS{f}(\BS{x})$ and $u(\BS{x},\BS{\beta})$ are continuous in $\BS{x}$ then the set of all nonnegative definite  information matrices is compact. Therefore,  there exists a locally D- resp. A-optimal design for any given parameter point $\BS{\beta}$.
\end{remark}
In order to verify the  local optimality of  a design The General Equivalence Theorem is usually employed. It  provides necessary and sufficient conditions for a design to be optimal with respect to the optimality criterion, in specific D- and A-criteria and thus the optimality of a suggested design can easily be verified or disproved (see \cite{silvey1980optimal}, p.40, p.48 and  p.54)). The most  generic  one is the celebrated  Kiefer-Wolfowitz equivalence theorem under  D-criterion ( see \citet{kiefer_wolfowitz_1960} ). In the following we  obtain  equivalent characterizations of locally D- and A-optimal designs.

\begin{theorem} \label{theo2-1}Let $\BS{\beta}$ be a given parameter point and let $\xi^*$ be a design 
with nonsingular information matrix $\BS{M}(\xi^*, \BS{\beta})$.\\[1ex]
(a)  The design $\xi^*$ is locally D-optimal (at $\BS{\beta}$) if and only if
\begin{eqnarray*}
&&u(\BS{x},\BS{\beta})\,\BS{f}^\trp(\BS{x})\BS{M}^{-1}(\xi^*, \BS{\beta})
\BS{f}(\BS{x})\leq p \ \ \mbox{ for all }\, \BS{x} \in \mathcal{X}.
\end{eqnarray*}
(b) The design $\xi^*$ is locally A-optimal (at $\BS{\beta}$) if and only if
\begin{eqnarray*}
&&u(\BS{x},\BS{\beta})\,\BS{f}^\trp(\BS{x})\BS{M}^{-2}(\xi^*, \BS{\beta})\BS{f}(\BS{x})\leq 
\mathrm{tr}\bigl(\BS{M}^{-1}(\xi^*, \BS{\beta})\bigr)\ \ \mbox{ for all }\, 
\BS  x \in \mathcal{X}.
\end{eqnarray*}
\end{theorem}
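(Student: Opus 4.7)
The plan is to derive both conditions from the standard convex-analytic characterization of optima. Both criteria, $\Phi_D(\BS{M})=-\log\det(\BS{M})$ (equivalent to minimizing $\det(\BS{M}^{-1})$) and $\Phi_A(\BS{M})=\mathrm{tr}(\BS{M}^{-1})$, are convex functionals on the cone of positive definite $p\times p$ matrices. Because the set $\{\BS{M}(\xi,\BS{\beta}):\xi\text{ a design}\}$ is convex (mixtures of designs correspond to convex combinations of information matrices), a nonsingular $\xi^*$ is optimal iff every feasible directional derivative at $\BS{M}(\xi^*,\BS{\beta})$ is nonnegative, and it suffices to check directions toward one-point designs $\xi_{\BS{x}}=\delta_{\BS{x}}$, $\BS{x}\in\mathcal{X}$.

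For the perturbation $\xi_\alpha=(1-\alpha)\xi^*+\alpha\xi_{\BS{x}}$, one has
\begin{equation*}
\BS{M}(\xi_\alpha,\BS{\beta})=(1-\alpha)\BS{M}(\xi^*,\BS{\beta})+\alpha\,u(\BS{x},\BS{\beta})\BS{f}(\BS{x})\BS{f}^\trp(\BS{x}).
\end{equation*}
Applying the matrix-calculus identities $\partial\log\det\BS{M}=\mathrm{tr}(\BS{M}^{-1}\partial\BS{M})$ and $\partial\BS{M}^{-1}=-\BS{M}^{-1}(\partial\BS{M})\BS{M}^{-1}$, together with the cyclic property of the trace and the rank-one structure $\BS{f}(\BS{x})\BS{f}^\trp(\BS{x})$, I would evaluate the right-derivatives at $\alpha=0$ as
\begin{align*}
\frac{d}{d\alpha}\Phi_D\bigl(\BS{M}(\xi_\alpha,\BS{\beta})\bigr)\Big|_{\alpha=0^+} &= p-u(\BS{x},\BS{\beta})\,\BS{f}^\trp(\BS{x})\BS{M}^{-1}(\xi^*,\BS{\beta})\BS{f}(\BS{x}),\\
\frac{d}{d\alpha}\Phi_A\bigl(\BS{M}(\xi_\alpha,\BS{\beta})\bigr)\Big|_{\alpha=0^+} &= \mathrm{tr}\bigl(\BS{M}^{-1}(\xi^*,\BS{\beta})\bigr)-u(\BS{x},\BS{\beta})\,\BS{f}^\trp(\BS{x})\BS{M}^{-2}(\xi^*,\BS{\beta})\BS{f}(\BS{x}).
\end{align*}
Necessity is then immediate: if $\xi^*$ is optimal these derivatives must be $\geq 0$ for every $\BS{x}\in\mathcal{X}$, which is precisely (a) respectively (b).

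For sufficiency, assume the pointwise inequality of (a) (respectively (b)) holds, and let $\xi=\sum_{i=1}^{m}\omega_i\delta_{\BS{x}_i}$ be any competitor with nonsingular information matrix. Set $\tilde\xi_\alpha=(1-\alpha)\xi^*+\alpha\xi$; by linearity of $\BS{M}(\cdot,\BS{\beta})$ the right-derivative of $\Phi$ along $\tilde\xi_\alpha$ at $\alpha=0$ is the $\omega_i$-weighted average of the one-point derivatives displayed above, hence nonnegative by the hypothesis applied at each $\BS{x}_i$. Convexity of $\Phi$ along the segment then yields $\Phi\bigl(\BS{M}(\xi,\BS{\beta})\bigr)\geq\Phi\bigl(\BS{M}(\xi^*,\BS{\beta})\bigr)$, which is the required optimality.

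The main technical point, more a bookkeeping step than a genuine obstacle, is the derivation of the A-directional derivative: one must compute $\mathrm{tr}\bigl(\BS{M}^{-1}(\xi^*,\BS{\beta})\,\BS{f}(\BS{x})\BS{f}^\trp(\BS{x})\,\BS{M}^{-1}(\xi^*,\BS{\beta})\bigr)$ and collapse it to $\BS{f}^\trp(\BS{x})\BS{M}^{-2}(\xi^*,\BS{\beta})\BS{f}(\BS{x})$ by the cyclic property of the trace. Beyond this, the argument is a direct application of the Kiefer--Wolfowitz equivalence machinery, with the only model-specific ingredient being the intensity $u(\BS{x},\BS{\beta})$ entering through the weight matrix $\BS{V}$ in $\BS{M}(\xi,\BS{\beta})=\BS{F}^\trp\BS{V}\BS{F}$.
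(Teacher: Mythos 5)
Your proposal is correct: the directional-derivative computations for $-\log\det$ and $\mathrm{tr}(\BS{M}^{-1})$, the reduction to one-point perturbations, and the convexity argument for sufficiency are all sound, and this is precisely the standard Kiefer--Wolfowitz/Whittle-type argument. The paper itself states Theorem \ref{theo2-1} without proof, deferring to Silvey (1980) and Kiefer and Wolfowitz (1960), and your argument is essentially the one those references give, so there is nothing to compare beyond noting that you have supplied the proof the paper omits.
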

\begin{remark}\rm
The inequalities given by part (a) and part (b) of Theorem \ref{theo2-1} are equations at support points of any D- or A-optimal design, respectively.
\end{remark}

 Throughout, we consider  gamma models that do not explicitly  involve a constant (intercept) term.  More precisely, we  assume that $f_j\neq 1$  for all  ($1\le j\le p$) and thus $f_j(\BS{0})=\BS{0}$ for all ($1\le j\le p$). In particular, we restrict to a first order model with 
\begin{equation}
 \BS{f}(\BS{x})=\BS{x},\,\, \mbox{ where } \,\, \BS{x}=(x_1,\dots,x_\nu)^\trp,\,\,\nu \ge2, \label{eq2-9}
\end{equation}
 and the two-factor model with interaction 
\begin{equation}
  \BS{f}(\BS{x})=(x_1,x_2,x_1x_2)^\trp. \label{eq2-10}
\end{equation}
Surely, condition (\ref{eq2-2}), i.e., $\BS{f}^\trp(\BS{x})\BS{\beta}>0$ for all $\BS{x}\in \mathcal{X}$ implies that $\BS{0}\notin \mathcal{X}$. Therefore, an  experimental region as $\mathcal{X}=[0,\infty)^\nu\setminus\{\BS{0}\}$ is considered.  Note that this experimental region is no longer compact therefore the existence of optimal designs is not assured and has to be checked separately. \par
In contrast, we often consider a compact experimental region  that is a $\nu$-dimensional hypercube 
\begin{equation}
\mathcal{X}=\bigl[a,b\bigr]^{\nu},\,\nu\ge 2 \mbox { with } a,b\in\mathbb{R} \mbox { and } 0<a<b, \label{eq3-1}
\end{equation}
 with  vertices $\BS{v}_i,\,i=1,\dots, K,\,K=2^{\nu}$ given by the points whose $i$-th coordinates are either $a$ or $b$ for all $i=1,\dots,\nu$. \par
In \cite{Gaffke2018}, we showed that under gamma models with regression function $\BS{f}(\BS{x})$   from (\ref{eq2-9}) or  (\ref{eq2-10}) and  experimental region $\mathcal{X}=[a,b]^\nu,\nu\ge2,\,0<a<b$ the design that has support  only among the vertices is at least good as any design that has no support points from the vertices w.r.t the Loewner semi-ordering of information matrices or, more generally, of nonnegative definite $p\times p$ matrices. That is if $\BS{A}$ and $\BS{B}$ are nonnegative definite $p\times p$ matrices we write $A\le B$ if and only if $B-A$ is nonnegative definite.  The set of all designs $\xi$ such that $\mathrm{supp}(\xi)\subseteq\{\BS{v}_1,\dots,\BS{v}_K\}$ is a locally essentially complete class of designs at a given $\BS{\beta}$. As a result, there exists a design $\xi^*$ that is only supported by  vertices of $\mathcal{X}$ which is locally optimal (at $\BS{\beta}$) w.r.t. D- or A-criterion.  On that basis, throughout,  we restrict to designs whose support is a subset of the vertices of $\mathcal{X}$ given by a hypercube  (\ref{eq3-1}).\par

\begin{remark}\label{rem-4}\rm
Let us denote by  $\psi(\BS{x})$ the left hand side of The Equivalence Theorems, Theorem \ref{theo2-1}, part (a) or part (b). Typically $\psi(\BS{x})$ is called the  sensitivity function. Actually, under  non-intercept gamma models $\psi(\BS{x})$  is invariant with respect to simultaneous scale transformation of $\BS{x}$, i.e., $\psi(\lambda \BS{x}) = \psi(\BS{x})$ for any $\lambda>0$. This essentially comes from the fact that the function $\BS{f}_{\BS{\beta}}(\BS{x})=\big(\BS{f}^\trp(\BS{x})\BS{\beta}\big)^{-1}\BS{f}(\BS{x})$ is invariant with respect to simultaneous rescaling of the components of $\BS{x}$, i.e., $\BS{f}_{\BS{\beta}}(\lambda \BS{x})=\BS{f}_{\BS{\beta}}(\BS{x})$. This property is explicitly transferred to the information matrix (\ref{eq4.4}) since it can be represented in form $\BS{M}(\BS{x},\BS{\beta})=\BS{f}_{\BS{\beta}}(\BS{x})\BS{f}_{\BS{\beta}}^\trp(\BS{x})$, and hence $\BS{M}(\lambda\BS{x},\BS{\beta})=\BS{M}(\BS{x},\BS{\beta})$.  In fact, this property plays a main rule in the solution of the forthcoming optimal designs. 
\end{remark}
\section{First order gamma model}

In this section we consider a gamma model with  
\begin{eqnarray}
&&\BS{f}(\BS{x})=(x_1,\dots,x_\nu)^\trp,\,\nu\ge 2,\,\,\BS{x}\in \mathcal{X},  \\
&&\mbox { where } \nonumber\\
&&\BS{f}_{\BS{\beta}}(\BS{x})=\frac{1}{\beta_1x_1+\dots+\beta_\nu x_\nu}\,\left({\begin{array}{c} x_1 \\ \vdots\\ x_\nu\end{array}}\right). \label{eq_f}
\end{eqnarray}
Firstly let  the experimental region $\mathcal{X}=[0,\infty)^\nu\setminus\{\BS{0}\}$ be considered. Denote by $\BS{e}_i$ for all $(1\le i \le \nu)$ the $\nu$-dimensional unit vectors. The parameter space is determined by  condition (\ref{eq2-2}), i.e., $\BS{x}^\trp\BS{\beta}>0$ for all $\BS{x}\in \mathcal{X}$ which implies that   $\BS{\beta}\in (0,\infty)^\nu$, i.e.,  $\beta_i>0$ for all ($1 \le i \le \nu$). 
Let the induced experimental region is given by $\BS{f}_{\BS{\beta}}(\mathcal{X})=\{\BS{f}_{\BS{\beta}}(\BS{x}): \BS{x}\in \mathcal{X}\}$. Although $\mathcal{X}$ is not compact  but $\BS{f}_{\BS{\beta}}(\mathcal{X})$ is compact. That is 
\[
\BS{f}_{\BS{\beta}}(\mathcal{X})= \mathrm{Conv}\{\BS{f}_{\BS{\beta}}(\BS{e}_i):\BS{e}_i \in \mathcal{X}, i=1,\dots, \nu\},
\]
 where ‘Conv’ denotes convex hull operation. That means each point $\BS{f}_{\BS{\beta}}(\BS{x})$ for all $\BS{x}\in \mathcal{X}$ can be written as a convex combination of $ \BS{f}_{\BS{\beta}}(\BS{e}_i)$  for all ($1\le i \le \nu$), i.e., we obtain $\BS{f}_{\BS{\beta}}(\BS{x})=\sum_{i=1}^{\nu}\alpha_i\BS{f}_{\BS{\beta}}(\BS{e}_i)$ for some $\alpha_i\ge 0$ for all ($1\le i \le \nu$) such that $\sum_{i=1}^{\nu}\alpha_i=1$ (here, $\alpha_i=\beta_ix_i/\sum_{i=1}^{\nu}\beta_ix_i, i=1,\dots,\nu$). As a consequence, the set of all nonnegative definite  information matrices is compact and  a locally optimal design can be obtained (cp.  Remark \ref{rem-1}). 

\begin{theorem} \label{theo3-1}
Consider the experimental region $\mathcal{X}=[0,\infty)^\nu\setminus\{\BS{0}\}$.  Let $\BS{x}_i^*=\BS{e}_i$\,\,for all $(1\le i\le \nu)$. Given a parameter point $\BS{\beta}$. Then
\begin{enumerate}[(i)]
\item The saturated design $\xi^*$ that assigns equal weight  $\nu^{-1}$ to the  support $\BS{x}_i^*$\,for all\, $(1\le i\le \nu)$   is locally D-optimal (at $\BS{\beta}$).
\item The saturated design $\zeta^*$  that assigns the weights $\omega_i^*=\beta_i/\sum_{i=1}^\nu\beta_i$\, for all\,$(1\le i \le \nu)$  to the corresponding design point $\BS{x}_i^*$\, for all $(1\le i\le \nu)$ is locally A-optimal  (at $\BS{\beta}$).
\end{enumerate}
 \end{theorem}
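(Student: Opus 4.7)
The plan is to verify both claims via the equivalence theorem (Theorem \ref{theo2-1}), exploiting the fact that the candidate designs are supported on the unit vectors, so their information matrices are diagonal and trivially inverted.

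First I would evaluate $\BS{M}(\BS{e}_i,\BS{\beta})$ from (\ref{eq4.4}) with $\BS{f}(\BS{x})=\BS{x}$: since $\BS{f}^\trp(\BS{e}_i)\BS{\beta}=\beta_i$, this yields $\BS{M}(\BS{e}_i,\BS{\beta})=\beta_i^{-2}\BS{e}_i\BS{e}_i^\trp$. Substituting into (\ref{eq2-5}) gives, for the uniform design $\xi^*$, a diagonal matrix $\BS{M}(\xi^*,\BS{\beta})=\nu^{-1}\mathrm{diag}(\beta_1^{-2},\ldots,\beta_\nu^{-2})$, and for $\zeta^*$ with weights $\omega_i^*=\beta_i/S$ (where $S=\sum_j\beta_j$), we get $\BS{M}(\zeta^*,\BS{\beta})=\mathrm{diag}\!\bigl(1/(S\beta_i)\bigr)_{i=1}^\nu$. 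Both are nonsingular, with explicit inverses $\BS{M}^{-1}(\xi^*,\BS{\beta})=\nu\,\mathrm{diag}(\beta_i^2)$ and $\BS{M}^{-1}(\zeta^*,\BS{\beta})=\mathrm{diag}(S\beta_i)$, and $\mathrm{tr}\bigl(\BS{M}^{-1}(\zeta^*,\BS{\beta})\bigr)=S^2$.

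Next I would compute the sensitivity functions. With $u(\BS{x},\BS{\beta})=(\BS{x}^\trp\BS{\beta})^{-2}$ (up to the irrelevant factor $\kappa\rho^{-2}$), the D-sensitivity equals
\[
(\BS{x}^\trp\BS{\beta})^{-2}\,\BS{x}^\trp\BS{M}^{-1}(\xi^*,\BS{\beta})\BS{x}=\nu\,(\BS{x}^\trp\BS{\beta})^{-2}\sum_{i=1}^\nu \beta_i^2 x_i^2,
\]
and the A-sensitivity equals $S^2(\BS{x}^\trp\BS{\beta})^{-2}\sum_i \beta_i^2 x_i^2$. In both cases, the inequality required by Theorem \ref{theo2-1} reduces to
\[
\sum_{i=1}^\nu (\beta_i x_i)^2 \le \Bigl(\sum_{i=1}^\nu \beta_i x_i\Bigr)^{2},
\]
which is immediate from expanding the right-hand side: since $\BS{x}\in[0,\infty)^\nu$ and $\beta_i>0$, all cross terms $2\beta_i\beta_j x_ix_j$ are nonnegative. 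Equality holds exactly when $\BS{x}$ is a nonnegative multiple of some $\BS{e}_i$, matching the observation that the sensitivity inequality is an equation at support points. Both (i) and (ii) then follow.

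The only point that needs care is that $\mathcal{X}=[0,\infty)^\nu\setminus\{\BS{0}\}$ is not compact, so in principle the supremum of the sensitivity need not be attained on $\mathcal{X}$; here the scale invariance noted in Remark \ref{rem-4} (equivalently, the compactness of $\BS{f}_{\BS{\beta}}(\mathcal{X})$ already used above Theorem \ref{theo3-1}) guarantees that the sensitivity depends on $\BS{x}$ only through its direction, so the pointwise bound on all of $\mathcal{X}$ is genuinely equivalent to the bound on the compact simplex spanned by $\BS{e}_1,\ldots,\BS{e}_\nu$. I do not anticipate a serious obstacle; the entire argument is a one-line inequality once the diagonal structure of the information matrices is written down, and the main care is in invoking the equivalence theorem for a non-compact design region via the invariance of $\BS{f}_{\BS{\beta}}$.
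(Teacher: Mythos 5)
Your proposal is correct and follows essentially the same route as the paper: both compute the diagonal information matrices of $\xi^*$ and $\zeta^*$, invert them explicitly, and reduce the D- and A-equivalence-theorem conditions to the single inequality $\sum_{i=1}^\nu\beta_i^2x_i^2\le\bigl(\sum_{i=1}^\nu\beta_ix_i\bigr)^2$, which holds because the cross terms $2\beta_i\beta_jx_ix_j$ are nonnegative. Your extra remark on handling the non-compact region via scale invariance is a sensible precaution that the paper addresses in the discussion of $\BS{f}_{\BS{\beta}}(\mathcal{X})$ immediately preceding the theorem.
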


\begin{proof} Define  the $\nu\times \nu$ design matrix $\BS{F}=\mathrm{diag}(\BS{e}_i)_{i=1}^\nu$ with $\nu\times \nu$ weight matrix $\BS{V}=\mathrm{diag}(\omega_i^*/\beta_{i}^{2})_{i=1}^\nu$.  Then we have $\BS{F}^\trp\BS{V}\BS{F}=\mathrm{diag}(\omega_i^*/\beta_{i}^{2})_{i=1}^{\nu}$ and $\big(\BS{F}^\trp\BS{V}\BS{F}\big)^{-1}=\mathrm{diag}(\beta_{i}^{2}/\omega_i^{*})_{i=1}^{\nu}$ where; \vspace{-3ex}
\begin{align*}\vspace{-3ex}
&\mbox{For D-optimality, }\,\,\omega_i^{*}=\nu^{-1} \forall i,\,\BS{M}^{-1}\bigl(\xi^*, \BS{\beta}\bigr)=\nu\,\mathrm{diag}(\beta_i^{2})_{i=1}^\nu\mbox{ and }\,\BS{f}^\trp(\BS{x})\mathrm{diag}(\beta_i^{2})_{i=1}^\nu\BS{f}(\BS{x})=\sum_{i=1}^{\nu}\beta_{i}^{2}x_i^2.\\[-2ex]
&\mbox{For A-optimality, }\,\,\omega_i^*=\beta_i/\sum_{i=1}^\nu\beta_i\,\forall i,\,\,\BS{M}^{-1}\bigl(\zeta^*, \BS{\beta}\bigr)=\big(\sum\limits_{i=1}^{\nu}\beta_{i}\big)\mathrm{diag}(\beta_{i})_{i=1}^{\nu},\,\,\mathrm{tr}\big(\BS{M}^{-1}(\zeta^*, \BS{\beta})\big) =\big(\sum_{i=1}^{\nu}\beta_{i}\big)^2,\\[-2ex]
&\BS{M}^{-2}\bigl(\zeta^*, \BS{\beta}\bigr)=\big(\sum_{i=1}^{\nu}\beta_{i}\big)^2\mathrm{diag}(\beta_{i}^2)_{i=1}^{\nu},\,\mbox{ and }\BS{f}^\trp(\BS{x})\big(\sum_{i=1}^{\nu}\beta_{i}\big)^2\mathrm{diag}(\beta_{i}^2)_{i=1}^{\nu}\BS{f}(\BS{x})=\big(\sum_{i=1}^{\nu}\beta_{i}\big)^2\sum_{i=1}^{\nu}\beta_{i}^2 x_i^2.\\[-6ex]
\end{align*}
Hence,  by The Equivalence Theorem (Theorem \ref{theo2-1}, part (a) and  part (b)) $\xi^*$ resp. $\zeta^*$ is locally D- resp. A-optimal (at $\BS{\beta}$) if  and only if 
$\,\,  \big(\sum_{i=1}^{\nu}\beta_ix_i\big)^{-2}\big(\sum_{i=1}^{\nu}\beta_{i}^{2}x_i^2\big)\leq 1$ for all \,$\BS{x} \in \mathcal{X}$  which is equivalent to  $-2\sum_{i<j=1}^{\nu}\beta_i\beta_jx_ix_j\le 0$\, for all \,$\BS{x} \in \mathcal{X}$. The latter inequality holds true by model assumptions $\beta_i>0, x_i \ge 0$ for all ($1\le i \le \nu$).
 \end{proof}

\begin{remark}\rm
The locally D-optimal designs provided by part (i) of Theorem \ref{theo3-1} is robust against misspecified values of the model parameter in its parameter space $(0,\infty)^\nu$.
\end{remark}

While the information matrix is invariant w.r.t. to simultaneous rescaling of the components of $\BS{x}$ as it is mentioned in Remark \ref{rem-4}, the result of  Theorem \ref{theo3-1} can be extended: 

\begin{corollary} \label{theo5.0.1}
Consider the experimental region $\mathcal{X}=[0,\infty)^\nu\setminus\{\BS{0}\}$. Given a constant vector $\BS{a}=(a_1,\dots,a_\nu)^\trp$\, such that $a_i>0$\, and\, $a_i\BS{e}_i\in \mathcal{X}$\,\,for all $(1\le i\le \nu)$.  Let $\BS{x}_i^*=a_i\BS{e}_i$\,\,for all $(1\le i\le \nu)$. Given a parameter point $\BS{\beta}$. Then
\begin{enumerate}[(i)]
\item The saturated design $\xi_{\BS{a}}^*$ that assigns equal weight  $\nu^{-1}$ to the  support $\BS{x}_i^*$\, $(1\le i\le \nu)$   is locally D-optimal (at $\BS{\beta}$).
\item The saturated design $\zeta_{\BS{a}}^*$  that assigns the weights $\omega_i^*=\beta_i/\sum_{i=1}^\nu\beta_i$\, for all\,$(1\le i \le \nu)$  to the corresponding design point $\BS{x}_i^*$\, for all $(1\le i\le \nu)$ is locally A-optimal  (at $\BS{\beta}$).
\end{enumerate}
 \end{corollary}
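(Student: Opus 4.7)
The plan is to obtain the corollary as a direct consequence of Theorem \ref{theo3-1} combined with the scale invariance property recorded in Remark \ref{rem-4}. The core observation is that each proposed support point $\BS{x}_i^*=a_i\BS{e}_i$ is a positive scalar multiple of the corresponding unit vector $\BS{e}_i$, so the transformation from Theorem \ref{theo3-1} to the present statement is pointwise a ``simultaneous rescaling'' in the sense of Remark \ref{rem-4} (the rescaling is trivial on the zero coordinates of $\BS{e}_i$).

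First I would apply Remark \ref{rem-4} coordinatewise to conclude $\BS{f}_{\BS{\beta}}(a_i\BS{e}_i)=\BS{f}_{\BS{\beta}}(\BS{e}_i)$ for each $i$, and hence, using $\BS{M}(\BS{x},\BS{\beta})=\BS{f}_{\BS{\beta}}(\BS{x})\BS{f}_{\BS{\beta}}^\trp(\BS{x})$, that the one-point information matrices agree: $\BS{M}(a_i\BS{e}_i,\BS{\beta})=\BS{M}(\BS{e}_i,\BS{\beta})$. Since the designs $\xi_{\BS{a}}^*$ and $\zeta_{\BS{a}}^*$ place exactly the same weights on the rescaled support as $\xi^*$ and $\zeta^*$ do on the original support, linearity of (\ref{eq2-5}) gives $\BS{M}(\xi_{\BS{a}}^*,\BS{\beta})=\BS{M}(\xi^*,\BS{\beta})$ and $\BS{M}(\zeta_{\BS{a}}^*,\BS{\beta})=\BS{M}(\zeta^*,\BS{\beta})$.

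Next I would rewrite the sensitivity functions in Theorem \ref{theo2-1} using $\BS{f}_{\BS{\beta}}$, namely
\[
u(\BS{x},\BS{\beta})\,\BS{f}^\trp(\BS{x})\BS{M}^{-1}(\xi,\BS{\beta})\BS{f}(\BS{x})\;\propto\;\BS{f}_{\BS{\beta}}^\trp(\BS{x})\BS{M}^{-1}(\xi,\BS{\beta})\BS{f}_{\BS{\beta}}(\BS{x}),
\]
and analogously for the A-criterion. Hence the sensitivity functions for $\xi_{\BS{a}}^*$ and $\zeta_{\BS{a}}^*$ over the experimental region $\mathcal{X}=[0,\infty)^{\nu}\setminus\{\BS{0}\}$ coincide verbatim with those already treated in the proof of Theorem \ref{theo3-1}. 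Invoking that theorem therefore yields the required D- and A-optimality.

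I do not anticipate a serious obstacle here; the only minor point to be careful about is that the scale invariance in Remark \ref{rem-4} is stated for a single global factor $\lambda>0$, so I need to justify its use when passing from $\BS{e}_i$ to $a_i\BS{e}_i$ with different $a_i$'s for different $i$. This is handled by applying the invariance separately to each support point (with $\lambda=a_i$), which is legitimate because the other coordinates of $\BS{e}_i$ vanish and so are unaffected by any choice of scaling.
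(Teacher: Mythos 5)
Your proposal is correct and follows essentially the same route as the paper: the paper introduces the corollary with exactly the observation that the information matrix is invariant under rescaling of $\BS{x}$ (Remark \ref{rem-4}), so that $\BS{M}(a_i\BS{e}_i,\BS{\beta})=\BS{M}(\BS{e}_i,\BS{\beta})$ and the optimality established in Theorem \ref{theo3-1} carries over unchanged. Your extra care about applying the invariance separately at each support point (different $\lambda=a_i$ per point) is exactly the right way to make the paper's one-line justification rigorous.
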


Actually, the derived optimal designs $\xi_{\BS{a}}^*$ and $\zeta_{\BS{a}}^*$ are not unique at a given parameter point $\BS{\beta}$. The convex combinations of locally optimal designs is optimal w.r.t. D- or A-criterion.   In the following we introduce a set of locally D-optimal designs and a set of locally A-optimal designs.

\begin{corollary} \label{theo3.5.1.} Under assumptions of Corollary \ref{theo5.0.1}. Let $\xi_{\BS{a}}^*$ and  $\zeta_{\BS{a}}^*$ are locally D- and A-optimal design at $\BS{\beta}$, respectively. Let 
\begin{align*}
\Xi^*&=\mathrm{Conv}\{ \xi_{\BS{a}}^*:\BS{a}=(a_1,\dots,a_\nu)^\trp, a_i>0\,\, \forall i=1,\dots,\nu\}.\\        
\mathrm{Z}^*&=\mathrm{Conv}\{ \zeta_{\BS{a}}^*:\BS{a}=(a_1,\dots,a_\nu)^\trp, a_i>0\,\, \forall i=1,\dots,\nu\}.
\end{align*}
 Then $\Xi^*$  is a set of  locally D-optimal designs (at $\BS{\beta}$) and \, $\mathrm{Z}^*$  is a set of  locally A-optimal designs (at $\BS{\beta}$).
 \end{corollary}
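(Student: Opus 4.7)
The plan is to reduce both parts of the corollary to the standard fact that the D- and A-optimality criteria are convex functionals of the design. The information matrix $\BS{M}(\xi,\BS{\beta})=\sum_i\omega_i\BS{M}(\BS{x}_i,\BS{\beta})$ is affine in the weight vector of $\xi$, so the map $\xi \mapsto \BS{M}(\xi,\BS{\beta})$ commutes with convex combinations. Composing with the convex functions $\BS{M}\mapsto -\log\det(\BS{M})$ and $\BS{M}\mapsto \mathrm{tr}(\BS{M}^{-1})$ on the cone of symmetric positive definite matrices produces convex functionals on the set of designs with nonsingular information matrix. Consequently the sets of locally D-optimal and locally A-optimal designs at $\BS{\beta}$ are each convex.

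Once that is in place, the conclusion is immediate. By Corollary \ref{theo5.0.1} every generator $\xi_{\BS{a}}^*$ of $\Xi^*$ is locally D-optimal at $\BS{\beta}$, and every generator $\zeta_{\BS{a}}^*$ of $\mathrm{Z}^*$ is locally A-optimal at $\BS{\beta}$. Let $c_D^*$ and $c_A^*$ denote the optimal values of $-\log\det \BS{M}(\cdot,\BS{\beta})$ and $\mathrm{tr}\bigl(\BS{M}^{-1}(\cdot,\BS{\beta})\bigr)$, respectively, over all designs with nonsingular information matrix. For a generic finite convex combination $\xi=\sum_{k=1}^r \lambda_k \xi_{\BS{a}_k}^*\in\Xi^*$, convexity of the composite D-functional together with the linearity of $\xi\mapsto \BS{M}(\xi,\BS{\beta})$ gives
\[
-\log\det \BS{M}(\xi,\BS{\beta}) \le \sum_{k=1}^r \lambda_k\bigl(-\log\det \BS{M}(\xi_{\BS{a}_k}^*,\BS{\beta})\bigr)=c_D^*,
\]
and the reverse inequality holds by definition of $c_D^*$. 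Equality forces $\xi$ to attain the minimum, hence $\xi$ is locally D-optimal at $\BS{\beta}$. The same argument with $\mathrm{tr}(\BS{M}^{-1})$ in place of $-\log\det \BS{M}$ handles $\mathrm{Z}^*$.

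The only step that demands a brief check, and hence the nearest thing to an obstacle, is verifying nonsingularity of $\BS{M}(\xi,\BS{\beta})$ so that the criteria are actually defined at the convex combination. This is not serious: each $\xi_{\BS{a}_k}^*$ is saturated on the linearly independent support vectors $a_i\BS{e}_i$, so $\BS{M}(\xi_{\BS{a}_k}^*,\BS{\beta})$ is positive definite, and picking any index $k_0$ with $\lambda_{k_0}>0$ yields the Loewner bound $\BS{M}(\xi,\BS{\beta})\ge \lambda_{k_0}\BS{M}(\xi_{\BS{a}_{k_0}}^*,\BS{\beta})$, whose right-hand side is positive definite. Therefore $\BS{M}(\xi,\BS{\beta})$ is positive definite as well, the convexity computation above applies without modification, and both assertions of the corollary follow.
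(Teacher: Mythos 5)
Your proof is correct and follows exactly the route the paper itself relies on: the paper states this corollary without proof, appealing only to the preceding remark that convex combinations of locally D- or A-optimal designs remain optimal, which is precisely the convexity argument you spell out (linearity of $\xi\mapsto\BS{M}(\xi,\BS{\beta})$, convexity of $-\log\det$ and of $\mathrm{tr}(\BS{M}^{-1})$ on the positive definite cone, and the Loewner bound guaranteeing nonsingularity of the combined information matrix). Your write-up is a complete and accurate filling-in of that standard argument.
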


In what follows we consider a hypercube $\mathcal{X}=[a,b]^\nu,\nu\ge2,\,0<a<b$, as an experimental region.  As given in Remark \ref{rem-4}, we have  $\BS{f}_{\BS{\beta}}(\lambda \BS{x})=\BS{f}_{\BS{\beta}}(\BS{x}),\,\lambda>0$ and thus a transformation of a gamma model without intercept to a gamma model with intercept can be obtained if, in particular, $\lambda=x_1^{-1},\,x_1>0$. This reduction is useful to determine precisely the candidate support points of a design. Another reduction might be obtained on the parameter space  when, in particular,  $\lambda=\beta_1^{-1}$. \par

Let us begin with the simplest case  $\nu=2$. A transformation of a two-factor model without intercept to a single-factor model with intercept is employed. Based on that D- and A-optimal designs are derived. 

\begin{theorem}\label{The3-2}
Consider the experimental region $\mathcal{X}=[a,b]^2,\,0<a<b$.  Let $\BS{x}_1^*=(a,b)^\trp$ and $\BS{x}_2^*=(b,a)^\trp$. Let $\BS{\beta}=(\beta_1,\beta_2)^\trp$ be given such that $\BS{\beta}^\trp\BS{x}_i^*>0$ for all  $i=1,2$ (which is equivalent to  condition (\ref{eq2-2})). Then, the unique locally D-optimal design $\xi_{\rm\scriptsize D}^*$ (at $\BS{\beta}$) is the two-point design supported by $\BS{x}_1^*$ and $\BS{x}_2^*$ with equal weights $1/2$. The unique locally A-optimal design $\xi_{\rm\scriptsize A}^*$ (at $\BS{\beta}$)
is the two-point design supported by $\BS{x}_1^*$ and $\BS{x}_2^*$ with weights $\omega_1^*=\frac{\beta_1b+\beta_2a}{(\beta_1+\beta_2)(a+b)}$ and  $\omega_2^*=\frac{\beta_1a+\beta_2b}{(\beta_1+\beta_2)(a+b)}$.
\end{theorem}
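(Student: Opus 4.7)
The plan is to exploit the scaling invariance $\BS{f}_{\BS\beta}(\lambda\BS{x}) = \BS{f}_{\BS\beta}(\BS{x})$ from Remark \ref{rem-4} to reduce the two-factor non-intercept model on $[a,b]^2$ to an equivalent single-factor model with intercept. Taking $\lambda = 1/x_1$, any point $\BS{x} = (x_1, x_2)^\trp$ with $x_1 > 0$ is equivalent to $(1, t)^\trp$ where $t = x_2/x_1$; the reduced regression function is $\tilde{\BS{f}}(t) = (1,t)^\trp$ with intensity $(\beta_1 + \beta_2 t)^{-2}$ on the transformed interval $t \in [a/b,\, b/a]$. Under this correspondence, $\BS{x}_1^* = (a,b)^\trp$ maps to $t = b/a$ and $\BS{x}_2^* = (b,a)^\trp$ maps to $t = a/b$, which are the two endpoints of the reduced interval. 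Since the information matrix $\BS{M}(\BS{x},\BS\beta) = \BS{f}_{\BS\beta}(\BS{x})\BS{f}_{\BS\beta}^\trp(\BS{x})$ and the sensitivity function appearing in Theorem \ref{theo2-1} depend on $\BS{x}$ only through $t$, it suffices to verify the equivalence-theorem inequalities as a scalar inequality in $t$ on $[a/b,\, b/a]$.

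For part (i), I would compute $\BS{M}(\xi_{\rm\scriptsize D}^*,\BS\beta)$ using the abbreviations $\alpha = \beta_1 a + \beta_2 b$ and $\gamma = \beta_1 b + \beta_2 a$; a short calculation gives $\det\BS{M}(\xi_{\rm\scriptsize D}^*,\BS\beta) = (b-a)^2(b+a)^2 / (4\alpha^2\gamma^2) > 0$, so $\BS{M}^{-1}$ exists. After substituting into Theorem \ref{theo2-1}(a) and reducing by scaling invariance, the D-condition becomes a single scalar inequality $h(t) \le 2$ on $[a/b,\, b/a]$. Because $t = a/b$ and $t = b/a$ are support points, $2 - h(t)$ is a rational expression vanishing at these two endpoints; factoring out the two known zeros from its numerator leaves a remaining factor whose nonnegativity on $[a/b,\, b/a]$ follows from $\beta_1,\beta_2>0$ (a consequence of assumption (\ref{eq2-2}) applied to $\BS{x}_1^*$ and $\BS{x}_2^*$) and $0<a<b$. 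Strictness of the inequality on the open interval confines the support of any D-optimal design to the two rays through $\BS{x}_1^*$ and $\BS{x}_2^*$, which intersect $[a,b]^2$ only in these two points; the weights $1/2$ are then forced, since for a two-point design of a two-parameter model $\det\BS{M}$ is proportional to $\omega_1(1-\omega_1)$, uniquely maximized at $\omega_1 = 1/2$.

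For part (ii), the weights $\omega_1^*, \omega_2^*$ are first derived by parameterizing all two-point designs on $\{\BS{x}_1^*, \BS{x}_2^*\}$ by $\omega_1 \in (0,1)$, computing $\mathrm{tr}(\BS{M}^{-1}(\xi,\BS\beta))$ in closed form in terms of $\alpha$, $\gamma$, and $\omega_1$, and solving the first-order condition, which yields the expressions in the theorem. Optimality is then verified via Theorem \ref{theo2-1}(b) by computing $\BS{M}^{-2}(\xi_{\rm\scriptsize A}^*,\BS\beta)$ and $\mathrm{tr}(\BS{M}^{-1}(\xi_{\rm\scriptsize A}^*,\BS\beta))$ and checking the sensitivity inequality via the $t$-reduction exactly as in part (i); uniqueness again follows from strictness on the open interval plus strict convexity of the A-criterion in the weights. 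The main obstacle I expect is the algebraic bookkeeping in the A-case, since $\BS{M}^{-2}$ makes the sensitivity function a more intricate rational function in $t$ than in the D-case; however, the symmetry under the swap $a \leftrightarrow b$, $\beta_1 \leftrightarrow \beta_2$ together with the one-variable reduction keeps the computation tractable, and the two endpoint-equalities again allow factoring out the known zeros to isolate the sign-determining factor.
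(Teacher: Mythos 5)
Your reduction via $t=x_2/x_1$ to a single\-/factor gamma model with intercept on $[a/b,\,b/a]$ is exactly the paper's first step; where you diverge is that the paper then simply invokes Theorem 4.1 of \cite{Gaffke2018} for the D- and A-optimal designs of the reduced model, whereas you re-derive everything from Theorem \ref{theo2-1}. That is a legitimate self-contained alternative, and your determinant formula and the weight-uniqueness arguments are sound in outline. There is, however, one genuine error in your justification of the key inequality: condition (\ref{eq2-2}) on $[a,b]^2$ does \emph{not} imply $\beta_1>0$ and $\beta_2>0$. Applied to $\BS{x}_1^*$ and $\BS{x}_2^*$ it gives only $\alpha:=\beta_1a+\beta_2b>0$ and $\gamma:=\beta_1b+\beta_2a>0$ (hence $\beta_1+\beta_2>0$), and one coefficient may well be negative, e.g.\ $a=1$, $b=2$, $\BS{\beta}=(1,-0.4)^\trp$. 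As written, your sign argument therefore covers only a proper subset of the parameter set for which the theorem is claimed. The gap is reparable: with $c=a/b$, $d=b/a$, the D-condition reduces to $(t-c)^2(\beta_1+\beta_2d)^2+(d-t)^2(\beta_1+\beta_2c)^2\le(d-c)^2(\beta_1+\beta_2t)^2$, and since $(d-c)(\beta_1+\beta_2t)=(t-c)(\beta_1+\beta_2d)+(d-t)(\beta_1+\beta_2c)$ the surplus equals $2(t-c)(d-t)(\beta_1+\beta_2c)(\beta_1+\beta_2d)$, whose nonnegativity on $[c,d]$ needs only $\beta_1+\beta_2c=\gamma/b>0$ and $\beta_1+\beta_2d=\alpha/a>0$ --- precisely the stated hypothesis, not $\beta_1,\beta_2>0$.

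A second point: if you actually carry out the first-order minimization of $\mathrm{tr}(\BS{M}^{-1})$ over two-point designs on $\{\BS{x}_1^*,\BS{x}_2^*\}$, the optimal weights come out proportional to the linear predictor values at the respective points, so the weight at $\BS{x}_1^*=(a,b)^\trp$ is $(\beta_1a+\beta_2b)/((\beta_1+\beta_2)(a+b))$ --- the \emph{opposite} assignment to the one printed in the theorem. (A direct check of Theorem \ref{theo2-1}(b) at $a=1$, $b=2$, $\BS{\beta}=(1,0)^\trp$ confirms that the weights $1/3$ at $(1,2)^\trp$ and $2/3$ at $(2,1)^\trp$ satisfy the A-equivalence condition with $\mathrm{tr}(\BS{M}^{-1})=5$, while the printed assignment gives $15/2$.) So your computation will not ``yield the expressions in the theorem'' as you assert; it will yield them with the two support points interchanged. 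This traces back to the paper's own slip in matching the endpoints $a/b$ and $b/a$ to $\BS{x}_1^*$ and $\BS{x}_2^*$; your mapping $t(\BS{x}_1^*)=b/a$, $t(\BS{x}_2^*)=a/b$ is the correct one, and following it through consistently exposes the mislabelling rather than reproducing the printed formulas.
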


\begin{proof} Since  $\BS{f}_{\BS{\beta}}(x_1^{-1}\, \BS{x})=\BS{f}_{\BS{\beta}}(\BS{x})$ for all $\BS{x}=(x_1,x_2)^\trp\in[a,b]^2$, we write 
\begin{eqnarray*}
&&\BS{f}_{\BS{\beta}}(\BS{x})=\bigl(\beta_1x_1+\beta_2x_2\bigr)^{-1}\,\bigl(x_1\,,\, x_2\bigr)^\trp\,=\,
\bigl(\beta_1+\beta_2t\bigr)^{-1}\,\bigl(1\,,\,t\bigr)^\trp,\\
&&\mbox{where }\ t=t(\BS{x})=x_2/x_1.
\end{eqnarray*}
 So the information matrices coincide with those from a single-factor gamma model with intercept.  The range of $t=t(\BS{x})$, as $\BS{x}$ ranges over $[a,b]^2$ is the interval $\bigl[(a/b)\,,\,(b/a)\bigr]$. Note also that the end points $a/b$ and $b/a$ come from the unique points $\BS{x}_1^*=(a,b)^\trp$ and $\BS{x}_2^{*}=(b,a)^\trp$, respectively. Following the proof of Theorem 4.1 in  \cite{Gaffke2018}) yields the stated results on the locally D- and A-optimal designs in our theorem, where for local A-optimality we get 
\[
\omega_1^*=\frac{\bigl(\beta_1+\beta_2\frac{a}{b}\bigr)\sqrt{1+(\frac{b}{a})^2}}{
\bigl(\beta_1+\beta_2\frac{a}{b}\bigr)\sqrt{1+(\frac{b}{a})^2} + 
 \bigl(\beta_1+\beta_2\frac{b}{a}\bigr)\sqrt{1+(\frac{a}{b})^2}}
\]
and it is  straightforwardly to verify that the above quantity is equal to $\frac{\beta_1b+\beta_2a}{(\beta_1+\beta_2)(a+b)}$. 
\end{proof} 

\begin{remark}\label{rem1} \rm
Actually, in case $\nu \ge 3$ an analogous transformation of the model as in the proof of Theorem \ref{The3-2} is obvious,
\begin{eqnarray*}
&&\BS{f}_{\BS{\beta}}(\BS{x})=\bigl(\beta_1+\beta_2 t_1+\beta_3 t_2+\ldots+\beta_\nu t_{\nu-1}\bigr)^{-1}
\bigl(1,t_1,\ldots,t_{\nu-1}\bigr)^\trp,\\
&&\mbox{where }\ t_j=t_j(\BS{x})=x_{j+1}/x_1 \,\mbox{ for all }\,(1\le j\le\nu-1)\ \mbox{ for }\BS{x}=(x_1,x_2,\ldots,x_\nu)^\trp\in[a,b]^\nu, 0 <a<b,
\end{eqnarray*}
leading thus to a first order model with intercept employing a $(\nu-1)$-dimensional factor $\BS{t}=(t_1,\ldots,t_{\nu-1})^\trp$.
However, its range $\bigl\{\BS{t}(\BS{x})\,:\,\BS{x}\in[a,b]^\nu\bigr\}\subseteq\mathbb{R}^{\nu-1}$ 
is not a cube but a more complicated polytope.  
E.g., for $\nu=3$ it can be shown that
\begin{eqnarray*}
\Bigl\{\BS{t}(\BS{x})\,:\,\BS{x}\in[a,b]^3\Bigr\}\,=\,{\rm Conv}\biggl\{
\left({\small\begin{array}{c} a/b \\ 1\end{array}}\right),
\left({\small\begin{array}{c} 1\\ a/b\end{array}}\right),
\left({\small\begin{array}{c} a/b\\ a/b \end{array}}\right),
\left({\small\begin{array}{c} b/a \\ 1\end{array}}\right),
\left({\small\begin{array}{c} 1 \\ b/a \end{array}}\right),
\left({\small\begin{array}{c} b/a \\ b/a \end{array}}\right)
\biggr\}
\end{eqnarray*}
where for each $\BS{x}\in[a,b]^3$ we get $\BS{t}(\BS{x})=(x_2/x_1,x_3/x_1)^\trp$ as it is depicted in  Figure \ref{fig:cubepoly1} for, in specific,  $a=1$ and $b=2$.  In \cite{Gaffke2018} we showed that the support of a design is a subset of vertices of the ploytope. One notes that for each vertex $\BS{v}\in\{(a,a,a)^\trp,(b,b,b)^\trp\}$ we get $\BS{t}(\BS{v})=(1,1)^\trp$ which lies in the interior of the convex hull above, i.e., $(1,1)^\trp$ is a proper convex combination of the vertices of the polytope.  Thus  this reduction on the vertices implies that both vertices $(a,a,a)^\trp$ and $(b,b,b)^\trp$ of the hupercube $[a,b]^3$ are out of consideration as support points of any optimal design. 
\end{remark}

\begin{figure}[H]
    \centering
    \subfloat[ The experimental region $\mathcal{X}={[}1,2{]}^3$. ]{{\includegraphics[width=5cm, height=5cm]{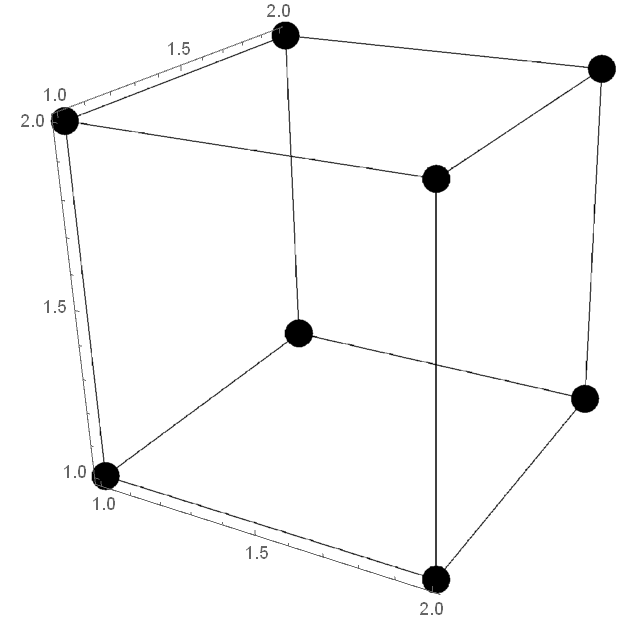} }}
    \qquad
    \subfloat[The transformed experimental region ${\rm Conv}\bigl\{(1/2,1/2)^\trp,(1/2,1)^\trp,(1,1/2)^\trp,(2,1)^\trp,(1,2)^\trp,(2,2)^\trp\bigr\}$. The interior point is $(1,1)^\trp$ which represents the original points $(1,1,1)^\trp$ and $(2,2,2)^\trp$ in ${[}1,2{]}^3$.]{{\includegraphics[width=5cm, height=5cm]{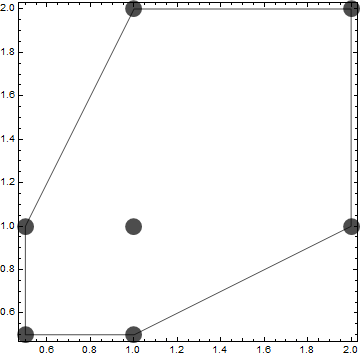} }}
    \caption{ }%
    \label{fig:cubepoly1}
\end{figure}

Let us concentrate on the  experimental region $\mathcal{X}=[1,2]^3$. The linear predictor of a three-factor gamma model is given by $\eta(\BS{x},\BS{\beta})=\beta_1x_1+\beta_2x_2+\beta_3x_3$. Assume that $\beta_2=\beta_3=\beta$, so the set of all parameter points under condition (\ref{eq2-2}), i.e.,  $\beta_1x_1+\beta_2x_2+\beta_3x_3>0$ for all $\BS{x}=(x_1,x_2,x_3)^\trp\in \mathcal{X}$ is characterized  by  
\[
\beta _1\leq 0,\,  \beta >-\beta _1\, \mbox{ or }\, \beta _1>0,\, \beta>-\frac{1}{4}\beta _1
\]
which is shown by Panel (a) of Figure \ref{fig:opt1}. \par
 Let the vertices of $\mathcal{X}=[1,2]^3$ be denoted by $\BS{v}_1=\big(1,1,1\big)^\trp$, $\BS{v}_2=\big(2,1,1\big)^\trp$,  $\BS{v}_3=\big(1,2,1\big)^\trp$, $\BS{v}_4=\big(1,1,2\big)^\trp$, $\BS{v}_5=\big(1,2,2\big)^\trp$, $\BS{v}_6=\big(2,1,2\big)^\trp$, $\BS{v}_7=\big(2,2,1\big)^\trp$, $\BS{v}_8=\big(2,2,2\big)^\trp$ with intensities $u_i=u(\BS{v}_i,\BS{\beta}),\, i=1,\dots, \nu$.   Actually, the region shown in Figure \ref{fig:opt1} is the parameter space of $\BS{\beta}=(\beta_1,\beta_2,\beta_3)^\trp$ only when $\beta_2=\beta_3$. We aim at finding  locally D-optimal designs at a given parameter point in that space. The expression `` optimality subregion'' will be used  to refer to  a subset of parameter points  where  saturated designs or  non-saturated designs with similar support  are locally D-optimal.  

In the next theorem we introduce  the locally D-optimal designs on respective optimality subregions. Table \ref{T-22} presents the order of the intensities in all optimality subregions and the corresponding D-optimal designs. The intensities for both vertices $\BS{v}_1$ and $\BS{v}_8$ are ignored due to the reduction (cp. Remark \ref{rem1}). It is noted that on each  subregion the  vertices of highest intensities perform mostly as a support of the corresponding D-optimal design. In particular,  analytic solution of  the locally D-optimal designs of type $\xi_5^*$ at a point $\BS{\beta}$ from the subregion  $-3\beta _1<\beta <-\frac{6}{5}\beta _1$,\,\, $\beta _1< 0$ cannot be developed so that numerical results are to be derived (cp. Remark \ref{rem-rr}).

\begin{table}[H]
\centering
{
\scalebox{0.8}{
\resizebox{\textwidth}{!}{
\begin{tabular}{lllll }
 \hline
 Subregions&\,\,\,\,\,\ \ Intensities order &\,\,\,\,\,\ \ D-optimal design \\
 \hline
$\beta>0$,\,$\beta _1= 0$   & $ u_2>u_3=u_4=u_6=u_7>  u_5$  &\hspace{6ex}$\xi_1^*$ \\[0.5ex]
  $\beta \ge -3\beta _1$,\,\,$\beta _1< 0$         &	$ u_2>u_6=u_7\approx u_3=u_4> u_5$ &\hspace{6ex}$\xi_1^*$\\[0.5ex]
  $\beta >\frac{1}{5}\beta _1$,\,\,$\beta _1>0$         &$ u_2>u_3=u_4>u_6=u_7>  u_5$&\hspace{6ex}$\xi_1^*$ \\[0.5ex]
$-\frac{1}{4}\beta _1<\beta \le -\frac{5}{23}\beta _1$,\,\,$\beta _1>0$       &	$ u_5>u_3=u_4> u_6=u_7> u_2$&\hspace{6ex}$\xi_2^*$\\[0.5ex]
$-\frac{5}{23}\beta _1<\beta < \frac{1}{5}\beta _1$,\,\,$\beta _1>0$           &$ u_3=u_4\ge u_5>u_2\ge  u_6=u_7$&\hspace{6ex}$\xi_3^*$\\[0.5ex]
 $-\beta _1< \beta\le -\frac{6}{5}\beta_1$,\,\,$\beta _1<0$& $ u_2> u_6=u_7>  u_3=u_4>u_5$&\hspace{6ex}$\xi_4^*$\\[0.5ex]
$-3\beta _1<\beta <-\frac{6}{5}\beta _1$,\,\, $\beta _1< 0$ & $ u_2> u_6=u_7>  u_3=u_4>u_5$ &\hspace{6ex}$\xi_5^*$\\
 \hline
\end{tabular}}}
}
\caption{The order of intensity values according to subregions correspond to D-optimal designs} 
\label{T-22}
\end{table}

\begin{figure}[H]
    \centering
    \subfloat[]{{\includegraphics[width=7cm, height=6cm]{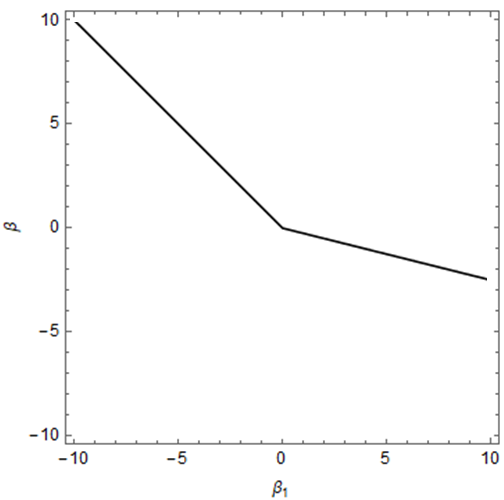} }}
    \qquad
    \subfloat[]{{\includegraphics[width=7cm, height=6cm]{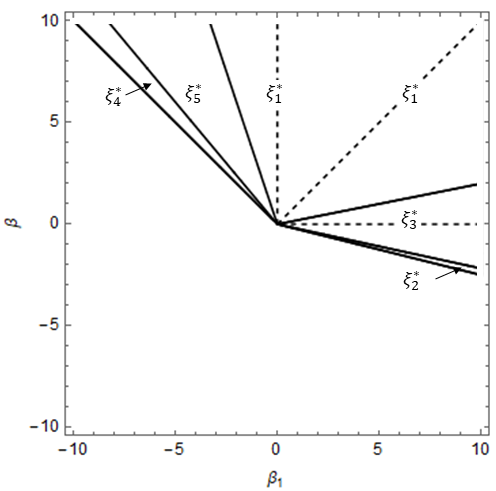} }}
    \caption{Panel (a):  The parameter space of $\BS{\beta}=(\beta_1,\beta_2,\beta_3)^\trp$ such that $\beta_2=\beta_3=\beta$.\\ Panel (b): Dependence of locally D-optimal designs from  Theorem \ref{The5-5} on $\BS{\beta}=(\beta_1,\beta_2,\beta_3)^\trp$ such that $\,\beta_2=\beta_3=\beta$. The dashed lines are; diagonal: $\beta=\beta_1$, vertical: $\beta_1=0$, horizontal: $\beta=0$.}%
    \label{fig:opt1}
\end{figure}

\begin{theorem} \label{The5-5}
Consider  the experimental region $\mathcal{X}=[1,2]^3$. Let a parameter point $\BS{\beta}=(\beta_1,\beta_2,\beta_3)^\trp$ be given such that  $ \beta_2=\beta_3=\beta$\, with \, $\beta >-\beta _1$\, if \,$\beta _1\le 0$\, or  \, $\beta >-\frac{1}{4}\beta _1$\, if\, $\beta _1>0$.    Then the following  designs are locally D-optimal (at $\BS{\beta}$).  
\begin{enumerate}[(i)]
\item If \,\,  $\beta>0$,\,$\beta _1= 0$\,\, or\,\,  $\beta \ge -3\beta _1$,\,$\beta _1< 0$\,\, or  \,\,  $\beta >\frac{1}{5}\beta _1$,\,$\beta _1>0$  then 
\begin{eqnarray*}
\xi_1^*=\left(\begin{array}{ccc}\BS{v}_2& \BS{v}_3&\BS{v}_4\\[.5ex]
\frac{1}{3}&\frac{1}{3}&\frac{1}{3}\end{array}\right).
\end{eqnarray*}
\item  If \,\ $-\frac{1}{4}\beta _1<\beta \le -\frac{5}{23}\beta _1$,\,\,$\beta _1>0$  then
\begin{eqnarray*}
\xi_2^*=\left(\begin{array}{ccc}\BS{v}_3& \BS{v}_4&\BS{v}_5\\[.5ex]
\frac{1}{3}&\frac{1}{3}&\frac{1}{3}\end{array}\right).
\end{eqnarray*}
\item If    $-\frac{5}{23}\beta _1<\beta < \frac{1}{5}\beta _1$,\,\,$\beta _1>0$  then 
\begin{eqnarray*}
\xi_3^*=\left(\begin{array}{cccc}\BS{v}_2& \BS{v}_3&\BS{v}_4&\BS{v}_5\\[.5ex]
\omega_1^*&\omega_2^*&\omega_3^*&\omega_4^*\end{array}\right).
\end{eqnarray*}
where 
\begin{equation*}
\omega_1^*=\frac{5+23\,\gamma}{16\,(1+4\,\gamma)},\,\,
\omega_2^*=\omega_3^*=\frac{9\,(1+3\gamma)^2}{32\,(1+\gamma)(1+4\,\gamma)},\,\, \omega_4^*=\frac{1-\gamma-20\,\gamma^2}{8\,(1+\gamma)(1+4\,\gamma)},\,\, \gamma=\frac{\beta}{\beta_1}.
 \end{equation*}
 \item If    $-\beta _1< \beta\le -\frac{6}{5}\beta_1$,\,\,$\beta _1<0$  then 
 \begin{eqnarray*}
\xi_4^*=\left(\begin{array}{ccc}\BS{v}_2& \BS{v}_6&\BS{v}_7\\[.5ex]
\frac{1}{3}&\frac{1}{3}&\frac{1}{3}\end{array}\right).
\end{eqnarray*}
\end{enumerate}
\end{theorem}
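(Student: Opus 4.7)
The plan is to verify optimality of each candidate design via The General Equivalence Theorem (Theorem \ref{theo2-1}(a)) by showing that the sensitivity function
$$\psi_k(\BS{x})=u(\BS{x},\BS{\beta})\,\BS{f}^\trp(\BS{x})\,\BS{M}^{-1}(\xi_k^*,\BS{\beta})\,\BS{f}(\BS{x})$$
does not exceed $p=3$ on $\mathcal{X}=[1,2]^3$. First I would reduce the verification set: since $\psi_k(\BS{x})$ is a quadratic form in $\BS{f}_{\BS{\beta}}(\BS{x})$ and therefore convex, its supremum over the polytope $\BS{f}_{\BS{\beta}}(\mathcal{X})$ (depicted in Figure \ref{fig:cubepoly1}(b)) is attained at extreme points, which correspond exactly to the vertices $\BS{v}_2,\dots,\BS{v}_7$ of the cube; by Remark \ref{rem1} the vertices $\BS{v}_1$ and $\BS{v}_8$ both map to the interior point $(1,1)^\trp$ and may be dropped from consideration.

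For cases (i), (ii) and (iv) (saturated, equiweighted three-point designs) I would build the normalized information matrix $\BS{M}(\xi_k^*,\BS{\beta})=\tfrac{1}{3}\sum_{i}(\BS{v}_i^\trp\BS{\beta})^{-2}\BS{v}_i\BS{v}_i^\trp$, substitute $\beta_2=\beta_3=\beta$ to exploit the symmetry $\BS{v}_3\leftrightarrow\BS{v}_4$ (resp.\ $\BS{v}_6\leftrightarrow\BS{v}_7$), and invert the resulting $3\times 3$ matrix in closed form. By the remark following Theorem \ref{theo2-1}, $\psi_k$ automatically attains the value $3$ at each support point, so only the three remaining vertices among $\{\BS{v}_2,\dots,\BS{v}_7\}$ require a check. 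Each inequality $\psi_k(\BS{v}_i)\leq 3$, after clearing the positive denominators $(\BS{v}_i^\trp\BS{\beta})^2$, becomes a polynomial inequality in $\gamma=\beta/\beta_1$; isolating the roots of these polynomials should give the precise optimality subregions such as $\gamma\geq -3$ in case (i) or $-1<\gamma\leq -6/5$ in case (iv).

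Case (iii) is more delicate since the four-point design carries parameter-dependent weights. I would first derive the weights by exploiting that a D-optimal design satisfies $\psi(\BS{v})=3$ at every support point, which together with $\sum\omega_i=1$ provides a determined system; equivalently, one may directly maximize $\det\BS{M}(\xi,\BS{\beta})$ over the weights (a concave log-problem). The symmetry $\omega_2^*=\omega_3^*$ follows from the invariance of the problem under swapping coordinates $2$ and $3$. After obtaining $\omega_1^*,\omega_2^*=\omega_3^*,\omega_4^*$ in the stated form, the condition $\omega_4^*>0$ yields the upper bound $\gamma<1/5$, while the lower bound $\gamma>-5/23$ is expected to arise precisely from the binding inequality $\psi_3(\BS{v}_6)=\psi_3(\BS{v}_7)\leq 3$ at the non-support vertices.

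The main obstacle is the algebraic bookkeeping in case (iii): inverting a $3\times 3$ information matrix whose entries are rational functions of $\gamma$, then verifying that the resulting polynomial inequalities at $\BS{v}_6,\BS{v}_7$ factor so that the threshold $-5/23$ emerges cleanly and no other vertex becomes binding inside the claimed subregion. A computer-algebra system is likely required to carry out these simplifications and to confirm the global bound $\psi_3\leq 3$.
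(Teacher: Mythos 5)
Your proposal is correct and follows essentially the same route as the paper: apply the Equivalence Theorem at the vertices of $[1,2]^3$ (discarding $\BS{v}_1,\BS{v}_8$ via the reduction of Remark \ref{rem1}), reduce each check to a quadratic polynomial inequality in $\gamma=\beta/\beta_1$ whose roots delimit the optimality subregions, and resort to computer algebra for the four-point design in case (iii). The only minor inaccuracy is your attribution of the bound $-5/23$ in case (iii) to the inequality at $\BS{v}_6,\BS{v}_7$; in the paper it is the positivity of $\omega_1^*$ that produces $\gamma>-5/23$ (and of $\omega_4^*$ that produces $\gamma<1/5$), while the vertex inequalities at $\BS{v}_1/\BS{v}_8$ and $\BS{v}_6/\BS{v}_7$ are confirmed by Mathematica over that range.
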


\begin{proof}
The proof is obtained by making use of the condition of the Equivalence Theorem (Theorem \ref{theo2-1}, part (a)).  So that we develop a system of feasible inequalities evaluated at the vertices $\BS{v}_i$ for all $(1\le i \le 8)$. For simplicity in computations when $\beta_1\neq0$ we utilize  the ratio $\gamma=\beta/\beta_1$  of which the range is given by $(-\infty,-1)\cup (-\frac{1}{4},\infty)$. It turned out that some inequalities are equivalent and thus a resulted system is reduced to an equivalent system of a few  inequalities.   The intersection of the set of  solutions of each system with the range of $\gamma$  leads to the optimality condition (subregion) of the corresponding optimal design.  Note that  $u_{1}=\beta_1^{-2}\big(1\,+2\,\gamma\big)^{-2}$, $u_{2}=\beta_1^{-2}\big(2\,+2\,\gamma\big)^{-2}$, $u_{3}=u_{4}=\beta_1^{-2}\big(1+3\,\gamma\big)^{-2}$, $u_{5}=\beta_1^{-2}\big(1+4\,\gamma\big)^{-2}$,  $u_{6}=u_{7}=\beta_1^{-2}\big(2+3\,\gamma\big)^{-2}$, $u_{8}=\beta_1^{-2}\big(2+4\,\gamma\big)^{-2}$. \\
($\rmnum{1}$)  The $3\times3$ design matrix $\BS{F}=[\BS{v}_2, \BS{v}_3,\BS{v}_4]^\trp$ is given by
\begin{eqnarray*}
\BS{F}=\left(
\begin{array}{ccc}
 2 & 1 & 1 \\\noalign{\medskip}
 1 & 2 & 1 \\\noalign{\medskip}
 1 & 1 & 2 \\\noalign{\medskip}
\end{array}
\right)\,\, \mbox{ with }\,\, \BS{F}^{-1}= \left(
\begin{array}{rrr}
 \frac{3}{4} & -\frac{1}{4} & -\frac{1}{4} \\\noalign{\medskip}
 -\frac{1}{4} & \frac{3}{4} & -\frac{1}{4} \\\noalign{\medskip}
 -\frac{1}{4} & -\frac{1}{4} & \frac{3}{4} \\\noalign{\medskip}
\end{array}
\right) \,\,\mbox{ and weight matrix } \BS{V}=\mathrm{diag}\big(u_{2},u_{3},u_{4}\big).
\end {eqnarray*}
Hence, the condition of The Equivalence Theorem is given by
\begin{align}
&\BS{f}^\trp(\BS{x})\BS{F}^{-1} \BS{V}^{-1}\big(\BS{F}^\trp\big)^{-1}\BS{f}(\BS{x})\leq \big(\beta_1x_1+\beta_2x_2+\beta_3x_3\big)^2\,\,\,\,\,\,\forall \BS{x}
\in \{1,2\}^3. \label{eq-eq}
\end{align}
For case    $\beta>0$,\,$\beta _1= 0$,    condition (\ref{eq-eq}) is  equivalent to 
\begin{align*}
4\big(3x_1-(x_2+x_3)\big)^2+9\bigl(\big( 3x_2-(x_1+x_3)\big)^2+\big(3x_3-(x_1+x_2)\big)^2\bigr)\le 16\big(x_2+x_3\big)^2\,\,\forall \BS{x}\in \{1,2\}^3,
\end{align*}
which is  independent of $\beta$ and is satisfied by $\BS{v}_i$ for all $(1\le i \le 8)$ with equality holds for the support. 
For the other cases condition (\ref{eq-eq}) is  equivalent to 
\begin{align}
&\big(3x_1-(x_2+x_3)\big)^2(2\,+2\,\gamma)^{2}+\bigl(\big( 3x_2-(x_1+x_3)\big)^2\nonumber\\
&+\big(3x_3-(x_1+x_2)\big)^2\bigr)(1+3\,\gamma\big)^{2}\le 16\big(x_1+\gamma(x_2+x_3)\big)^2\,\,\forall \BS{x}\in \{1,2\}^3. \label{eq33-2}
\end{align}
After some lengthy but straightforward calculations,  the above inequalities reduce to 
\begin{align}
&15 \gamma^2+2\gamma-1 \ge 0\label{eq3-4} \mbox {  for } \BS{v}_5 \\
&3 \gamma^2+10\gamma+3 \ge 0\label{eq3-5}  \mbox {  for $\BS{v}_6 $ or $\BS{v}_7$ } 
\end{align}
The l.h.s. of each of  (\ref{eq3-4}) and (\ref{eq3-5})  above is a polynomial in $\gamma$ of degree 2 and thus the sets of solutions are given by  $(-\infty,-\frac{1}{3}]\cup[\frac{1}{5},\infty)$ and $(-\infty,-3]\cup[-\frac{1}{3},\infty)$, respectively. Note that the interior bounds are the roots of the respective polynomials. Hence, by considering the intersection of both sets  with the range of $\gamma$,  the design $\xi_1^*$ is locally D-optimal if   $\gamma \in (-\infty,-3]\cup[\frac{1}{5},\infty)$ which is equivalent to the optimality subregion $\beta \ge -3\beta _1$,\,$\beta _1< 0$\,\, or  \,\,  $\beta >\frac{1}{5}\beta _1$,\,$\beta _1>0$ given in part ($\rmnum{1}$) of the theorem.\\
($\rmnum{2}$)   The $3\times3$ design matrix $\BS{F}=[\BS{v}_3, \BS{v}_4,\BS{v}_5]^\trp$ is given by
\begin{eqnarray*}
\BS{F}= \left(
 \begin {array}{ccc}
  1&2&1\\ \noalign{\medskip}
  1&1&2\\ \noalign{\medskip}
  1&2&2
  \end {array}
\right) \,\, \mbox{ with }\,\, \BS{F}^{-1}= \left(
 \begin {array}{rrr} 
 2&2&-3\\ \noalign{\medskip}
 0&-1&1\\ \noalign{\medskip}
 -1&0&1
 \end {array}
 \right) \,\, \mbox{ and weight matrix }\,\,\BS{V}=\mathrm{diag}\big(u_{3},u_{4},u_{5}\big).
\end {eqnarray*}
Hence, the condition of The Equivalence Theorem is equivalent to
\begin{align*}
&\big( \big(2\,x_1-x_2\big)^2+\big(2\,x_1-x_3\big)^2  \big)\,\big(1+3\,\gamma\big)^{2}+ \big( x_3+x_2-3\,x_1\big)^2\,\big(1+4\,\gamma\big)^{2}\le\big(x_1+\gamma(x_2+x_3)\big)^2\,\,\forall \BS{x}\in \{1,2\}^3, 
\end{align*}
and also  the above inequalities reduce to 
\begin{align}
&69 \gamma^2+38\gamma+5 \le 0 \mbox {  for $\BS{v}_2 $ }.  \label{eq3-7}
\end{align}
Again, the set of solutions of  the polynomial  determined by the l.h.s. of  inequality (\ref{eq3-7}) is given by $[-\frac{1}{3},-\frac{5}{23}]$. By considering the intersection with the range of $\gamma$, the design $\xi_2^*$ is locally D-optimal if $\gamma \in (-\frac{1}{4},-\frac{5}{23}]$.\ \\
($\rmnum{3}$) Consider design $\xi_3^*$. Note that  $\omega_1^*>0$ for all $\gamma>-5/23$, $\omega_2^*>0$ for all $\gamma\in \mathbb{R}$ and $\omega_4^*>0$ for all $\gamma \in (-\frac{1}{4},\frac{1}{5})$, and thus it is obvious  that $\omega_1^*,\omega_2^*,\omega_4^*$ are positive over  $(-\frac{5}{23},\frac{1}{5})$ and $\sum_{i=1}^4 \omega_i^*=1$. The $4\times3$ design matrix is given by $\BS{F}=[\BS{v}_2, \BS{v}_3,\BS{v}_4,\BS{v}_5]^\trp$ with weight matrix $\BS{V}=\mathrm{diag}\big(s_{2},s_{3},s_{4},s_{5}\big)$ where $s_{i}=\omega_{i}^*u_{i}, i=2,3,4,5$ and $s_{3}=s_{4}$.  The information matrix is given by
\begin{eqnarray*}
\BS{M}\bigl(\xi_3^*, \BS{\beta}\bigr)=\left(
\begin {array}{ccc}
 4\,s_{2}+2\,s_{3}+s_{5}&2\,s_{2}+3\,s_{3}+2\,s_{5}&2\,s_{2}+3\,s_{3}+2\,s_{5}\\ \noalign{\medskip}
2\,s_{2}+3\,s_{2}+2\,s_{5}&s_{2}+5\,s_{3}+4\,s_{5}&s_{2}+4\,s_{3}+4\,s_{5}\\ \noalign{\medskip}
2\,s_{2}+3\,s_{3}+2\,s_{5}&s_{2}+4\,s_{3}+4\,s_{4}&s_{2}+5\,s_{3}+4\,s_{5}
\end {array}
 \right) 
\end {eqnarray*}
 and one calculates $\det\BS{M}\bigl(\xi_3^*, \BS{\beta}\bigr)=16\,s_{2}\,s_{3}^{2}+18\,s_{2}\,s_{3}\,s_{5}+s_{3}^{2}s_{5}$. Define the following quantities 
\begin{eqnarray*}
&&c_{1}=\frac{s_{3}(2\,s_{2}+9\,s_{3}+8\,s_{5})}{16\,s_{2}\,s_{3}^{2}+18\,s_{2}\,s_{3}\,s_{5}+s_{3}^{2}s_{5}},\,\,
 c_{2}=\frac{-s_{3}(2\,s_{2}+3\,s_{3}+2\,s_{5})}{16\,s_{2}\,s_{3}^{2}+18\,s_{2}\,s_{3}\,s_{5}+s_{3}^{2}s_{5}},\\
&&c_{3}=\frac{10\,s_{2}\,s_{3}+9\,s_{2}\,s_{5}+s_{3}^{2}+s_{3}\,s_{5}}{16\,s_{2}\,s_{3}^{2}+18\,s_{2}\,s_{3}\,s_{5}+s_{3}^{2}s_{5}},\,\,c_{4}=\frac{-6\,s_{2}\,s_{3}+9\,s_{2}\,s_{5}-s_{3}^{2}}{16\,s_{2}\,s_{3}^{2}+18\,s_{2}\,s_{3}\,s_{5}+s_{3}^{2}s_{5}}.\\
&&\mbox{The inverse of the information matrix is given by }\\
&&\BS{M}^{-1}\bigl(\xi_3^*, \BS{\beta}\bigr)=\left( 
\begin {array}{ccc}
 c_1&c_2&c_2\\ \noalign{\medskip}
 c_2&c_3&c_4\\ \noalign{\medskip}
 c_2&c_4&c_3
 \end {array}
 \right). \,\, \mbox{ Hence, the condition of The Equivalence Theorem}\\
&& \mbox{ is equivalent to}\\
 &&c_1\,x_1^2+c_3\,(x_2^2+x_3^2)+2\,c_2\,(x_1\,x_2+x_1\,x_3)+2\,c_4\,x_2\,x_3 \le 3\,\big(x_1+\gamma\,(x_2+x_3)\big)^2\,\, \forall\,\,\BS{x}\in \{1,2\}^3\\
 &&\mbox{which is equivalent to the following system of inequalities }
 \end{eqnarray*} \vspace{-5ex} 
  \begin{align*}
&c_1+4 c_2+2c_3+2 c_4\leq 3 \left(1+2 \gamma\right)^2 \mbox{ for $\BS{v}_1$ or $\BS{v}_8$ }\\
&4 c_1+12 c_2+5 c_3+4 c_4\leq 3 \left(2+3 \gamma\right)^2 \mbox{ for $\BS{v}_6$ or $\BS{v}_7$ }
\end{align*}
However, due to the complexity of the system above  we employed  computer algebra using  Wolfram Mathematica 11.3 (see \cite{Mathematica}) to obtain the solution for $\gamma$. \\
($\rmnum{4}$) The $3\times3$ design matrix $\BS{F}=[\BS{v}_2, \BS{v}_6,\BS{v}_7]^\trp$ is given by
\begin{eqnarray*}
\BS{F}= \left( 
\begin {array}{ccc} 
2&1&1\\ \noalign{\medskip}
2&1&2\\ \noalign{\medskip}
2&2&1
\end {array}
 \right) \,\, \mbox{ with }\,\, \BS{F}^{-1}= \left( 
 \begin {array}{rrr} 
 \frac{3}{2}&-\frac{1}{2}&-\frac{1}{2}\\ \noalign{\medskip}
 -\frac{1}{2}&0&1\\ \noalign{\medskip}
 -\frac{1}{2}&1&0
 \end {array}
 \right) \,\, \mbox{ and weight matrix }\,\,\BS{V}=\mathrm{diag}\big(u_{2},u_{6},u_{7}\big). 
\end {eqnarray*}
Hence, the condition of The Equivalence Theorem is equivalent to 
\begin{align*}
&\left(\left(x_2-\frac{x_1}{2}\right)^2+ \left(x_3-\frac{x_1}{2}\right)^2\right)\left( 2+ 3 \gamma\right)^2
+ \left(\frac{3 x_1}{2}-x_2-x_3\right)^2\left( 2+2 \gamma\right)^2\le\big(x_1+\gamma(x_2+x_3)\big)^2\,\,\forall \BS{x}\in \{1,2\}^3,
\end{align*}
and the above inequalities reduce to 
\begin{align}
&90\, \gamma ^2+168 \gamma +72\leq 0 \mbox{ for $\BS{v}_3$ or $\BS{v}_4$} \label{eqqq}\\
&6 \gamma ^2+16 \gamma +8\leq 0 \mbox{ for $\BS{v}_8$ }\label{eqqqq}
\end{align}
In analogy to parts ($\rmnum{1}$) and ($\rmnum{2}$) the sets of solutions of  (\ref{eqqq}) and (\ref{eqqqq}) are given by  $[-1.2,-\frac{2}{3}]$ and $[-2,-\frac{2}{3}]$, respectively where the interior bounds are the roots of the respective polynomials. Hence, by considering the intersection of both sets  with the range of $\gamma$,  the design $\xi_4^*$ is locally D-optimal if   $\gamma \in [-1.2,-1)$.
\end{proof}

In Panel (b) of Figure \ref{fig:opt1} the  optimality subregions of $\xi_1^*$, $\xi_2^*$,  $\xi_3^*$ and  $\xi_4^*$ form Theorem \ref{The5-5} are depicted.  Note that  each design of $\xi_1^*$,   $\xi_2^*$ and $\xi_4^*$ denotes a single design whereas $\xi_3^*$ determines a certain type of designs with weights  depend on the parameter values. A well known form of $\xi_3^*$ is obtained at $\beta=(-1/7)\beta_1$ which represents the uniform design on the vertices $\BS{v}_2, \BS{v}_3, \BS{v}_4, \BS{v}_5$.  Additionally, along  the horizontal dashed line, i.e.,  $\beta=0$,  $\xi_3^*$ assigns the weights $\omega_1^*=5/16,\omega_2^*=\omega_3^*=9/32, \omega_4^*=1/8$ to  $\BS{v}_2, \BS{v}_3,\BS{v}_4,\BS{v}_5$, respectively.  For equally size of parameters, i.e., $\beta_1=\beta$ the diagonal dashed line in Panel (b) represents a case  where $\xi_1^*$ is D-optimal.  \par

\begin{remark}\rm \label{rem-rr} Deriving a locally D-optimal design at a given parameter point from the subregion   $-3\beta _1<\beta <-\frac{6}{5}\beta _1$,\,\, $\beta _1< 0$ is  not available analytically. Therefore, employing the multiplicative algorithm (see \cite{yu2010monotonic} and \cite{Harman2009}) in  the software package $\textbf{\textsf{R}}$  (see \cite{R}) provides numerical solutions which show that the locally D-optimal design on that subregion is of form
 \begin{eqnarray*}
\xi_5^*=\left(\begin{array}{ccccc}\BS{v}_2& \BS{v}_3&\BS{v}_4&\BS{v}_6&\BS{v}_7\\[.5ex]
\omega_1^*&\omega_2^*&\omega_2^*&\omega_3^*&\omega_3^*\end{array}\right)
\end{eqnarray*}
which is supported by five vertices  with weights may depend on $\BS{\beta}$.  The equal weights are due to the symmetry. Table {\ref{T-2}} shows some numerical results in terms of the ratio $\gamma=\beta/\beta_1$ where $\gamma\in (-3,-6/5)$  .
\end{remark}

\begin{table}[H]
\centering
{
\scalebox{0.6}{
\resizebox{\textwidth}{!}{
\begin{tabular}{lllllllll }
 \hline
 $\,\,\,\,\,\gamma$&\,\,\,\,\,\ \ $\BS{v}_2$  &\,\,\,\,\,\ \ $\BS{v}_3$ &\,\,\,\,\,\ \ $\BS{v}_4$&\,\,\,\,\,\ \ $\BS{v}_6$&\,\,\,\,\,\ \ $\BS{v}_7$\\
 \hline
 $-2.9$         &0.3312&0.3285&	0.3285 &0.0059  &  0.0059  \\
 $-2.5$         &0.3225&0.3051& 0.3051& 0.0336& 0.0336  \\
$-2$         &	0.3125 &0.2604  &  0.2604  & 0.0833&	0.0833\\
$-1.5$        &	0.3125 &0.1701  &  0.1701  & 0.1736&	0.1736\\
 $-1.23$           &0.3297&0.0325&0.0325&0.3027&0.3027\\
 \hline
\end{tabular}}}
}
\caption{D-optimal designs on $\mathcal{X}=\left[1,2\right]^3$ at $\gamma\in (-3,-6/5)$ where $\gamma=\beta/\beta_1$ and   $-3\beta _1<\beta <-\frac{6}{5}\beta _1$,\,\, $\beta _1< 0$.} 
\label{T-2}
\end{table}

In general, for gamma models without intercept, finding optimal designs for a model with multiple factors, i.e., $\nu> 3$ is not an easy task. The optimal design given by part ($\rmnum{1}$)  of Theorem \ref{The5-5} might be extended for arbitrary number of factors under sufficient and necessarily condition on the parameter points:
\begin{theorem} \label{The3-3}
Consider  the  experimental region $\mathcal{X}=\big[a,b\big]^\nu, \nu \ge 3,\,0<a<b$.   Let $\BS{\beta}$ be a parameter point such that $\BS{f}^\trp(\BS{x})\BS{\beta}>0$ for all $\BS{x}\in \mathcal{X}$. Define  $T(\BS{x})=\sum_{i=1}^{\nu}x_i$, $q=\frac{a}{(\nu-1)a+b}$ and $c_j=(b-a)\beta_j+a\sum_{i=1}^{\nu}\beta_i$\,($1 \le j \le \nu$).  Then the design 
 $\xi^*$ which assigns equal weights $\nu^{-1}$ to the support
  \[ 
   \BS{x}^*_1=\big(b,a,\dots,a\big)^\trp,\, \BS{x}^*_2=\big(a,b,\dots,a\big)^\trp, \,\dots\,,\, \BS{x}^*_\nu=\big(a,a,\dots,b\big)^\trp
   \] 
is locally D-optimal (at $\BS{\beta}$) if and only if for all $ \BS{x}=(x_1,\dots,x_\nu)^\trp\in \{a,b\}^\nu$
\begin{equation}
\sum_{j=1}^{\nu}\big(x_j-qT(\BS{x})\big)^2c_j^{2}\leq (b-a)^2\big(\sum_{j=1}^{v}\beta_jx_j\big)^2. \label{eq4.36}
\end{equation}
\end{theorem}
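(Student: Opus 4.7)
The plan is to combine Theorem \ref{theo2-1}(a) with the vertex-reduction property recalled just before Remark \ref{rem-4}. Because every design on $\mathcal{X}=[a,b]^\nu$ is Loewner-dominated by some design supported in $\{a,b\}^\nu$ and the D-criterion is monotone with respect to the Loewner ordering, a vertex-supported design $\xi^*$ is locally D-optimal on $\mathcal{X}$ if and only if it is D-optimal within the class of designs supported on the finite set $\{a,b\}^\nu$. Applying the Equivalence Theorem to this finite design region reduces D-optimality to the sensitivity inequality $u(\BS{x},\BS{\beta})\BS{f}^\trp(\BS{x})\BS{M}^{-1}(\xi^*,\BS{\beta})\BS{f}(\BS{x})\le\nu$ holding only at each vertex $\BS{x}\in\{a,b\}^\nu$. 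It therefore suffices to compute $\BS{M}^{-1}(\xi^*,\BS{\beta})$ explicitly and recognise the resulting vertex inequality as (\ref{eq4.36}).

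Since the support points have the form $\BS{x}_j^*=a\BS{1}+(b-a)\BS{e}_j$ for $j=1,\dots,\nu$, the associated $\nu\times\nu$ design matrix has rank-one-update form $\BS{F}=(b-a)\BS{I}+a\BS{1}\BS{1}^\trp$, and a Sherman--Morrison inversion gives
\[
\BS{F}^{-1}=\frac{1}{b-a}\bigl(\BS{I}-q\,\BS{1}\BS{1}^\trp\bigr),
\]
with $q=a/(b+(\nu-1)a)$ as defined in the theorem. Because $\BS{x}_j^{*\trp}\BS{\beta}=a\sum_i\beta_i+(b-a)\beta_j=c_j$, the weight matrix reads $\BS{V}=\nu^{-1}\mathrm{diag}(c_j^{-2})_{j=1}^\nu$, whence $\BS{M}(\xi^*,\BS{\beta})=\BS{F}^\trp\BS{V}\BS{F}$ is nonsingular and
\[
\BS{M}^{-1}(\xi^*,\BS{\beta})=\nu\,\BS{F}^{-1}\mathrm{diag}(c_j^2)_{j=1}^\nu\,\BS{F}^{-1}.
\]

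For an arbitrary $\BS{x}=(x_1,\dots,x_\nu)^\trp\in\mathcal{X}$ one finds $\BS{F}^{-1}\BS{x}=(b-a)^{-1}\bigl(\BS{x}-q\,T(\BS{x})\BS{1}\bigr)$, so that
\[
\BS{f}^\trp(\BS{x})\BS{M}^{-1}(\xi^*,\BS{\beta})\BS{f}(\BS{x})\;=\;\frac{\nu}{(b-a)^2}\sum_{j=1}^{\nu}c_j^2\bigl(x_j-q\,T(\BS{x})\bigr)^2.
\]
Combined with $u(\BS{x},\BS{\beta})=\bigl(\sum_j\beta_j x_j\bigr)^{-2}$ and multiplied through by the positive factor $(b-a)^2\bigl(\sum_j\beta_j x_j\bigr)^2/\nu$, the Equivalence Theorem inequality at $\BS{x}\in\{a,b\}^\nu$ becomes exactly (\ref{eq4.36}); this chain of equivalences yields both directions of the stated iff. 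The only mildly delicate step is the Sherman--Morrison inversion of $\BS{F}$ together with the resulting simplification of the quadratic form, but I do not anticipate a genuine obstacle, since the theorem merely rephrases local D-optimality as the explicit polynomial system (\ref{eq4.36}) on the $2^\nu$ vertices rather than attempting to solve it.
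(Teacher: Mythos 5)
Your proposal is correct and follows essentially the same route as the paper: the same rank-one representation $\BS{F}=(b-a)\BS{I}+a\BS{1}\BS{1}^\trp$ with inverse $\frac{1}{b-a}(\BS{I}-q\BS{1}\BS{1}^\trp)$, the same identification $u(\BS{x}_j^*,\BS{\beta})=c_j^{-2}$, and the same reduction of the sensitivity inequality of Theorem \ref{theo2-1}(a) to the polynomial system (\ref{eq4.36}) checked on the vertices. The only difference is presentational: you spell out explicitly why the Equivalence Theorem need only be verified on $\{a,b\}^\nu$ (via the essentially complete class of vertex-supported designs and Loewner monotonicity of the D-criterion), a justification the paper leaves implicit from its Section 2 discussion.
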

\begin{proof} Define the $\nu \times \nu$ design matrix  $\BS{F}=[\BS{f}(\BS{x}^*_1),\dots,\BS{f}(\BS{x}^*_\nu)]^\trp$. Thus $
\BS {F}=(b-a)\BS{I}+a\BS{1}\BS{1}^\trp$ and $\BS{F}^{-1}=\frac{1}{(b-a)}\left(
\BS{I}-q\BS{1}\BS{1}^\trp\right)$ where $\BS{I}$ is the $\nu\times \nu$ identity matrix and $\BS{1}$ is a $\nu\times 1$ vector of ones. The information matrix of $\xi^*$  is given by $\BS{M}\bigl(\xi^*, \BS{\beta}\bigr)=\frac{1}{\nu}\BS{F}^\trp \BS{V}\BS{F}$ where $\BS{V}=\mathrm{diag}\Big(u(\BS{x}^*_j,\BS{\beta})\Big)_{j=1}^\nu$ is the $\nu\times \nu $ weight matrix. Note that $u(\BS{x}^*_j,\BS{\beta})=c_j^{-2}$\, for all  \,$(1 \le j \le \nu)$.
Hence, the l.h.s. of the condition of the Equivalence Theorem (Theorem \ref{theo2-1}, part (a)) is equal to  
\begin{align}
&\big(\sum_{j=1}^{v}\beta_jx_j\big)^{-2}\BS{f}^\trp(\BS{x})\BS{M}^{-1}\bigl(\xi^*, \BS{\beta}\bigr)\BS{f}(\BS{x})=\nu\big(\sum_{j=1}^{v}\beta_jx_j\big)^{-2}\BS{f}^\trp(\BS{x})\BS{F}^{-1} \BS{V}^{-1}\BS{F}^{-1}\BS{f}(\BS{x})\nonumber\\
&=\nu\big((b-a)\sum_{j=1}^{v}\beta_jx_j\big)^{-2}\left(
\BS{f}^\trp(\BS{x})-qT(\BS{x})\BS{1}^\trp\right) \mathrm{diag}\Big(c_j^{2}\Big)_{j=1}^\nu\left(
\BS{f}(\BS{x})-qT(\BS{x})\BS{1}\right)\nonumber\\
&=\nu\Big((b-a)\sum_{j=1}^{v}\beta_jx_j\Big)^{-2}\sum_{j=1}^{\nu}\big(x_j-qT(\BS{x})\big)^2c_j^{2}.\label{eq-cond}\\
&\mbox{By Equivalence Theorem design $\xi^*$ is locally D-optimal  if and only if (\ref{eq-cond}) is less than or equal to $\nu$}\nonumber\\
&\mbox {for all \,$\BS{x}\in \{a,b\}^\nu$ leading  resulting inequalities that are equivalent to assumption (\ref{eq4.36}).\nonumber } 
\end{align}
\end{proof}

Note that the D-optimal design given in part ($\rmnum{1}$) of Theorem \ref{The5-5} is a special case of Theorem \ref{The3-3}
 when $\nu=3$ where condition (\ref{eq4.36}) covers condition (\ref{eq33-2}) in the proof of part ($\rmnum{1}$)  of Theorem \ref{The5-5}. Actually it can be seen that already in the general case of Theorem \ref{The3-3} the optimality condition (\ref{eq4.36}) depends only on the ratios $\beta_j/(\sum_{i=1}^{\nu}\beta_i)$ for all ($1\le j \le \nu$). Hence the scaling factor vanishes. Similarly  note that already condition (\ref{eq4.36}) depends   on $a$ and $b$ only through their ratio $a/b$.
However, assuming  the model parameters are having equal size implies that the D-optimality of a design is independent of the model parameters whereas it depends on the ratio $a/b$ as it is shown in the next corollary.

\begin{corollary} \label{The3-4}
Consider the  experimental region $\mathcal{X}=\big[a,b\big]^\nu, \nu \ge 3,\,0<a<b$.  Let $\BS{\beta}$ be a parameter point such that $\beta_j=\beta_{j'}=\beta >0\,\,(1 \le j< {j'} \le \nu)$.  Then the design 
 $\xi^*$ which assigns equal weights $\nu^{-1}$ to the support $\BS{x}^*_1=\big(b,a,\dots,a\big)^\trp$, $\BS{x}^*_2=\big(a,b,\dots,a\big)^\trp$, $\dots$, $\BS{x}^*_\nu=\big(a,a,\dots,b\big)^\trp$ is locally D-optimal (at $\BS{\beta}$) if and only if 
 \begin{eqnarray}
\Big(\frac{b}{a}\Big)^2\ge \frac{\big(\nu-1 \big)\big(\nu-2\big)}{2}. \label{eq3.2}
\end{eqnarray}
\end{corollary}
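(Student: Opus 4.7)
The plan is to apply Theorem \ref{The3-3} to the symmetric case $\beta_j=\beta$ and reduce the polynomial condition (\ref{eq4.36}) to a single scalar inequality in $r=b/a$. First I would observe that under the equal-parameter assumption the quantities $c_j$ all collapse to a common value $c=\beta\bigl(b+(\nu-1)a\bigr)$, while $q=a/\bigl(b+(\nu-1)a\bigr)$ is independent of $\BS{\beta}$. Since (\ref{eq4.36}) is then invariant under permutations of the coordinates of $\BS{x}\in\{a,b\}^\nu$, the entire condition depends only on the number $k\in\{0,1,\dots,\nu\}$ of coordinates of $\BS{x}$ equal to $b$.

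Next I would parametrize by $k$ and $r=b/a>1$. Writing $T(\BS{x})=a(kr+\nu-k)$ and simplifying the linear combinations $b-qT(\BS{x})=a(r-1)(r+\nu-k)/(r+\nu-1)$ and $a-qT(\BS{x})=a(1-k)(r-1)/(r+\nu-1)$ by elementary factoring, substituting back into (\ref{eq4.36}), and cancelling the common positive factor $a^4\beta^2(r-1)^2$, the optimality condition reduces to
\begin{equation*}
k(r+\nu-k)^2 + (\nu-k)(k-1)^2 \;\le\; (kr+\nu-k)^2 \quad \mbox{for every } k\in\{0,1,\dots,\nu\}.
\end{equation*}

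The key step I would aim for is a clean factorization of the gap; direct expansion yields the identity
\begin{equation*}
(kr+\nu-k)^2 - k(r+\nu-k)^2 - (\nu-k)(k-1)^2 \;=\; (k-1)\bigl[kr^2-(\nu-k)(\nu-1)\bigr].
\end{equation*}
With this in hand, the ``if and only if'' characterization is immediate: $k=0$ and $k=1$ contribute the trivially nonnegative quantities $\nu(\nu-1)$ and $0$, respectively, whereas for $k\ge 2$ nonnegativity is equivalent to $r^2\ge(\nu-k)(\nu-1)/k$. Since the right-hand side is strictly decreasing in $k$, the binding constraint occurs at $k=2$, producing precisely $(b/a)^2\ge(\nu-1)(\nu-2)/2$, which is (\ref{eq3.2}). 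I expect the main technical obstacle to be spotting (or guessing and then verifying) the factorization above; without it the $k$-by-$k$ analysis looks uninstructive. The remaining work is careful bookkeeping of the algebraic simplifications.
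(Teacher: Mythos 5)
Your proposal is correct and follows essentially the same route as the paper: both specialize condition (\ref{eq4.36}) to the equal-parameter case, reduce it to a quadratic inequality in the number $k$ of coordinates of $\BS{x}$ equal to $b$ (with a root at $k=1$, the support), and identify $k=2$ as the binding case, yielding (\ref{eq3.2}). Your explicit factorization $(k-1)\bigl[kr^2-(\nu-k)(\nu-1)\bigr]$ is just a restatement of the paper's observation that this quadratic vanishes at $r_1=1$ and $r_2=\nu(\nu-1)a^2/\bigl((\nu-1)a^2+b^2\bigr)$ and is nonnegative at every integer count if and only if $r_2\le 2$.
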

\begin{proof}
Let $\beta_j=\beta_{j'}=\beta \,\,(1 \le j< {j'} \le \nu)$ then condition (\ref{eq4.36}) of Theorem \ref{The3-3} reduces to  
\begin{eqnarray}
\left((\nu-1)a^2+b^2\right) \left(\sum_{j=1}^{\nu}x_j\right)^2-\left((\nu-1)a+b\right)^2\sum_{j=1}^{\nu}x_j^2\ge 0\,\,\,\, \forall \BS{x}\in \{a,b\}^\nu. \label{eq4.37}
\end{eqnarray}
For  $\BS{x}=(x_1,\dots,x_\nu) \in \{a,b\}^\nu$,  let $r=r(\BS{x})\in\{0,1,\dots, \nu\}$ denote the number of coordinates of $\BS{x}$ that are equal to $b$. Then $\sum_{j=1}^{\nu}x_j^2=\left(\nu-r\right)a^2+r\,b^2$ and $\left(\sum_{j=1}^{\nu}x_j\right)^2=\left(\left(\nu-r\right)a+r\,b\right)^2$. 
Hence, condition (\ref{eq4.37}) is equivalent to
\begin{equation}
(a-b)^2\,\tau\,r^2+(a-b)( (b+a) -2\,a\,\nu\,\tau)\,r+\nu\,a^2(\nu\,\tau-1)\, \ge 0\,\,\forall r \in\{0,1,\dots, \nu\},\, \nu \ge 2  \label{eq4.38}
\end{equation}
where  $\tau=\frac{\left(\nu-1\right)a^2+b^2}{\left(\left(\nu-1\right)a+b\right)^2}$.  The l.h.s. of  inequality (\ref{eq4.38}) is a  polynomial in $r$ of degree 2 with positive leading term. The polynomial attains $0$ at $r=1$ ($r_1=1$ indicates the support of $\xi^*$) and at $r_2=\frac{\nu\,\left(\nu-1\right)\,a^2}{\left(\nu-1\right)a^2+b^2}$. Note that  the polynomial is positive and increasing for all $r>2$ (i,e., (\ref{eq4.38}) holds true ) when  $r_2\le 2$ or, equivalently, $\frac{\nu\,\left(\nu-1\right)\,a^2}{\left(\nu-1\right)a^2+b^2}\le 2$  which coincides with condition (\ref{eq3.2}). 
\end{proof}

\begin{remark} \rm Actually, condition (\ref{eq3.2}) is obviously fulfilled for $\nu=2$ (compare Theorem \ref{The3-2}). For the case $\nu=3$ the bound of l.h.s. of condition (\ref{eq3.2}) is $1$ and, hence always fulfilled.
\end{remark}
\section{ Gamma models with interaction }
In this section we are still dealing with a model without intercept. We consider a model with two factors and with an interaction term where $\BS{f}(\BS{x})=\big(x_1,x_2,x_1x_2 \big)^\trp$ and $\BS{\beta}=\big(\beta_1,\beta_2,\beta_3\big)^\trp$. The experimental region is given by $\mathcal{X}=[a,b]^2,\,0<a<b$ and we aim at deriving a locally D-optimal design.  Our approach is  employing a transformation of the proposed model to a model with intercept by removing the interaction term $x_1x_2$. It follows that 
\begin{align}
\BS{f}_{\BS{\beta}}(\BS{x})&=\bigl(\beta_1x_1 +\beta_2x_2 +\beta_3x_1x_2\bigr)^{-1}\,\big(x_1,x_2,x_1x_2 \big)^\trp \nonumber \\
&=\bigl(\beta_1t_2 +\beta_2 t_1 +\beta_3\bigr)^{-1}\,\big(t_2,t_1,1 \big)^\trp=\BS{f}_{\BS{\beta}}^\circ(\BS{t})  \label{eq4-1}
\end{align}
where $\BS{t}=\bigl(t_{1},t_2\bigr)^\trp, t_j=1/x_j,\, j=1, 2$. The range of $\BS{t}=\BS{t}(\BS{x})$, as $\BS{x}$ ranges over $\mathcal {X}=[a,b]^2$ is a cube given by $ \mathcal T=\bigl[(1/b)\,,\,(1/a)\bigr]^2$.
One can  rearrange the terms of (\ref{eq4-1}) by making use of  the $3 \times 3$ anti-diagonal transformation matrix $\BS{Q}$. That is $ \BS{\tilde{f}}_{\tilde{\BS{\beta}}}(\BS{t})=\bigl(\BS{\tilde{f}}^\trp(\BS{t})\tilde{\BS{\beta}}\bigr)^{-1}\BS{\tilde{f}}(\BS{t})$
where $\BS{\tilde{f}}(\BS{t})=\BS{Q}\BS{f}^\circ(\BS{t})$ and $\tilde{\BS{\beta}}=\big(\BS{Q}^\trp\big)^{-1}\BS{\beta}=\big(\beta_3,\beta_2,\beta_1 \big)^\trp$. Note that $\BS{\tilde{f}}(\BS{t})=(1,t_1,t_2)^\trp$.
Thus
\begin{eqnarray}
\BS{\tilde{f}}_{\BS{\tilde{\beta}}}(\BS{t})=\bigl(\beta_3+\beta_2t_1 +\beta_1t_2\bigr)^{-1}\,\big(1,t_1,t_2 \big)^\trp. \label{eq4-2}
\end{eqnarray}
Since (\ref{eq4-2}) coincides with that for a model with intercept  the  D-criterion is equivariant (see \cite{radloff2016invariance}) with respect to 
a one-to-one  transformation from  $\mathcal{T}$ to $\mathcal {Z}=[0,1]^2$ where 
 \begin{eqnarray}
 t_j\rightarrow z_j=\frac{1}{(1/a)-(1/b)}t_j-\frac{1/b}{(1/a)-(1/b)},j=1,2. \label{eq4-3}
 \end{eqnarray} 
For a given transformation matrix    \begin{eqnarray*}
  \BS{B}=\left(\small
  \begin{array}{ccc}
  1&0&0\\ \noalign{\medskip}
 \frac{-(1/b)}{(1/a)-(1/b)}&\frac{1}{(1/a)-(1/b)}&0\\ \noalign{\medskip}
 \frac{-(1/b)}{(1/a)-(1/b)}&0&\frac{1}{(1/a)-(1/b)} 
 \end{array}
 \right)
 \,\,\mbox{ with }\,\,
  \BS{B}^{-1}=\left(\small
  \begin{array}{ccc}
  1&0&0\\ \noalign{\medskip}
   \frac{1}{b}&\frac{1}{a}-\frac{1}{b}&0\\ \noalign{\medskip}
 \frac{1}{b}&0&\frac{1}{a}-\frac{1}{b} 
       \end{array}
       \right)
   \end{eqnarray*}
we have  $\BS{\tilde{\tilde{\beta}}}=\big(\BS{B}^\trp\big)^{-1}\tilde{\BS{ \beta}}=(\tilde{\tilde{\beta}}_0,\,\tilde{\tilde{\beta}}_{1},\,\tilde{\tilde{\beta}}_{2})^\trp$\, and hence\,   $\tilde{\tilde{\beta}}_0=\beta_3+(1/b)(\beta_1+\beta_2)$\,,\, $\tilde{\tilde{\beta}}_{1}=\beta_2((1/a)-(1/b))$\, and \,  $\tilde{\tilde{\beta}}_{2}=\beta_1((1/a)-(1/b))$. 
It follows that 
\begin{eqnarray}
\BS{\tilde{f}}_{\BS{\tilde{\tilde{\beta}}}}(\BS{z})=\BS{B}\BS{\tilde{f}}_{\BS{\tilde{\tilde{\beta}}}}\big(\BS{t}\big)=\bigl(\tilde{\tilde{\beta}}_0+\tilde{\tilde{\beta}}_{1}z_1+\,\tilde{\tilde{\beta}}_2 z_{2})^{-1}\,\bigl(1\,,\,z_1\,,\,z_{2}\bigr)^\trp, \BS{z}\in[0,1]^2. \label{eq4-4}
\end{eqnarray}
 
Let $\BS{M}(\BS{x},\BS{\beta})=\BS{f}_{\BS{\beta}}(\BS{x})\BS{f}_{\BS{\beta}}^\trp(\BS{x})$,\, $\BS{\tilde{M}}(\BS{t},\tilde{\BS{ \beta}})=\BS{\tilde{f}}_{\BS{\tilde{\beta}}}(\BS{t})\BS{\tilde{f}}_{\BS{\tilde{\beta}}}^\trp(\BS{t})$ and  $\BS{\tilde{\tilde{M}}}(\BS{z},\BS{\tilde{\tilde{\beta}}})=\BS{\tilde{f}}_{\BS{\tilde{\tilde{\beta}}}}(\BS{z})\BS{\tilde{f}}_{\BS{\tilde{\tilde{\beta}}}}^\trp(\BS{z})$ be the information matrices for the models which corresponding to (\ref{eq4-1}), (\ref{eq4-2}) and (\ref{eq4-4}), respectively. It is easily to observe that
\begin{eqnarray*}
\BS{M}(\BS{x},\BS{\beta}) =\BS{Q}^{-1}\BS{\tilde{M}}(\BS{t},\BS{\tilde \beta})\BS{Q}^{-1}=\BS{B}^{-1}\BS{Q}^{-1}\BS{\tilde{\tilde{M}}}(\BS{z},\BS{\tilde{\tilde{\beta}}})\BS{Q}^{-1}\BS{B}^{-1},
\end{eqnarray*}
thus the  derived D-optimal designs on $\mathcal{X}$,  $\mathcal{T}$ and  $\mathcal{Z}$, respectively are equivariant. According to the mapping of $\BS{x}$ to $\BS{t}$ in the line following (\ref{eq4-1}) and the mapping from $\BS{t}$ to $\BS{z}$ in (\ref{eq4-3}) each component is mapped separately: $x_j \to t_j \to z_j$ without permuting them. Therefore,  one modifies the direct one-to-one transformation $g: \mathcal {X} \rightarrow \mathcal {Z}$ where 
 \begin{eqnarray}
 x_j\rightarrow z_j=\frac{1/x_j}{(1/a)-(1/b)}-\frac{1/b}{(1/a)-(1/b)},j=1,2.\label{eq4-5}
 \end{eqnarray} 
Let  $\xi_{g}^{*}$ be a design defined on $\mathcal {Z}$ that assigns the weights $\xi(\BS{x})$ to the mapped support points $g(\BS{x}),\,\BS{x}\in \mathrm{supp}(\xi^*)$. In fact,  $\xi^*$ on $\mathcal{X}$  is locally D-optimal (at  $\BS{\beta}$) if and only if  $\xi_{g}^{*}$ on $\mathcal {Z}$ is locally D-optimal (at  $\BS{\tilde{\tilde{\beta}}}$). It is worth noting by transformation (\ref{eq4-5}) we obtain
 \begin{align*}
 &(b,b)^\trp\rightarrow (0,0)^\trp,\,\, (b,a)^\trp\rightarrow (1,0)^\trp,\\
 &(a,b)^\trp\rightarrow (0,1)^\trp,\,\, (a,a)^\trp\rightarrow (1,1)^\trp.
   \end{align*}
\begin{corollary}\label{theo4-1}
Consider  $\BS{f}(\BS{x})=\big(x_1,x_2,x_1x_2 \big)^\trp$  on  $\mathcal{X}=[a,b]^2,\,0<a<b$. Denote the vertices by $\BS{v}_1=(b,b)^\trp$,  $\BS{v}_2=(b,a)^\trp$, $\BS{v}_3=(a,b)^\trp$,  $\BS{v}_4=(a,a)^\trp$. Let $\BS{\beta}=(\beta_1,\beta_2,\beta_3)^\trp$ be a parameter point. Then the unique locally D-optimal design $\xi^*$ (at $\BS{\beta}$)  is as follows. 
\begin{enumerate}[(i)]
\item  
If \ $\beta_3^2+ \frac{1}{b^2}(\beta_1^2+\beta_2^2)+( \frac{1}{b^2}- \frac{1}{a^2}+ \frac{2}{a\,b})\beta_1\beta_2+ \frac{2}{b}\beta_3(\beta_1+\beta_2)\le 0$ \ then $\xi^*$ assigns equal weights $1/3$ to $\BS{v}_1,\BS{v}_2,\BS{v}_3$.
\item  
If \  $ \beta_3^2+ \frac{1}{b^2}\beta_1^2+ \frac{1}{a^2}\beta_2^2+ \frac{2}{b}\beta_3\beta_1+\frac{2}{a}\beta_3\beta_2+(\frac{1}{b^2}+\frac{1}{a^2})\beta_1\beta_2\le 0$  \ \ then $\xi^*$ assigns equal weights $1/3$ to $\BS{v}_1,\BS{v}_2,\BS{v}_4$.
\item 
If \  $ \beta_3^2+ \frac{1}{b^2}\beta_2^2+ \frac{1}{a^2}\beta_1^2+ \frac{2}{b}\beta_3\beta_2+\frac{2}{a}\beta_3\beta_1+(\frac{1}{b^2}+\frac{1}{a^2})\beta_1\beta_2\le 0$  \ \ then $\xi^*$ assigns equal weights $1/3$ to $\BS{v}_1,\BS{v}_3,\BS{v}_4$. 
\item  
If \ $\beta_3^2+\frac{1}{a^2}(\beta_1^2+\beta_2^2)+(\frac{1}{a^2}-\frac{1}{b^2}+\frac{2}{ab}) \beta_1\beta_2+\frac{2}{a}\beta_3(\beta_1+\beta_2)\le 0$ \ then $\xi^*$ assigns equal weights $1/3$ to $\BS{v}_2,\BS{v}_3,\BS{v}_4$. 
 \item  
 If none of the cases $(i)$ -- $(iv)$  applies then $\xi^*$ is supported by the four vertices 
 \[
 \xi^*=\left(\begin{array}{cccc} \BS{v}_1 &\BS{v}_2 & \BS{v}_3 &\BS{v}_4\\ [.5ex] 
\omega_1^*&\omega_2^* &\omega_3^*&\omega_4^*\end{array}\right),\,\,\mbox{ where $ \omega_\ell^*>0\  (1\le \ell\le4),\  \sum_{\ell=1}^4\omega_\ell^*=1$.}
\]
\end{enumerate}
\end{corollary}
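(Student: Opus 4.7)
The strategy is to exploit the equivariance established immediately before the corollary: the D-optimal design problem on $\mathcal{X}=[a,b]^2$ with $\BS{f}(\BS{x})=(x_1,x_2,x_1x_2)^\trp$ is equivalent, via the one-to-one map $g$ of (\ref{eq4-5}), to the D-optimal design problem on $\mathcal{Z}=[0,1]^2$ for the first-order intercept model $\BS{\tilde f}(\BS{z})=(1,z_1,z_2)^\trp$ at the transformed parameter $\BS{\tilde{\tilde\beta}}=(\beta_3+(\beta_1+\beta_2)/b,\ \beta_2(1/a-1/b),\ \beta_1(1/a-1/b))^\trp$. The four vertices $\BS{v}_1,\ldots,\BS{v}_4$ of $\mathcal{X}$ map to the four vertices of $\mathcal{Z}$, and by the essentially complete class result from \cite{Gaffke2018} cited in Section 2, any locally D-optimal design has support among those four vertices.

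I will first handle the four saturated cases (i)--(iv). For any saturated design supported on three of the vertices of $\mathcal{Z}$, the standard AM--GM argument applied to $\det\BS{M}=(\det\BS{F})^2\prod_{i=1}^3\omega_i u_i$ forces the optimal weights to equal $1/3$, so D-optimality reduces via Theorem \ref{theo2-1}(a) to a single inequality at the excluded vertex $\BS{z}_e$, namely $u(\BS{z}_e,\BS{\tilde{\tilde\beta}})\,\BS{\tilde f}^\trp(\BS{z}_e)\BS{M}^{-1}\BS{\tilde f}(\BS{z}_e)\le 3$. Explicit inversion of the triangular (or permutation-of-triangular) $3\times3$ design matrix lets this condition be collapsed to a compact quadratic form: for the excluded vertex $(1,1)$ it simplifies to $\tilde{\tilde\beta}_0^{\,2}\le\tilde{\tilde\beta}_1\tilde{\tilde\beta}_2$, and analogous reductions handle the other three excluded vertices. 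Substituting the defining expressions for $\BS{\tilde{\tilde\beta}}$ in terms of $\BS{\beta},a,b$ and expanding then produces the four quadratic inequalities (i)--(iv) exactly as stated.

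For part (v), existence of a locally D-optimal design is guaranteed because $\mathcal{Z}$ is compact (Remark \ref{rem-1}). If none of (i)--(iv) holds then no saturated design is D-optimal, so any D-optimal design must place positive weight on all four vertices. Strict convexity of $-\log\det$ on the positive-definite cone gives a unique optimal information matrix $\BS{M}^*$. A direct inspection of the $(1,2),(1,3),(2,3)$ and $(1,1)$ entries shows that the four rank-one matrices $\BS{\tilde f}(\BS{z}_\ell)\BS{\tilde f}^\trp(\BS{z}_\ell)$ at the vertices of $\mathcal{Z}$ are linearly independent in the space of symmetric $3\times3$ matrices; hence the linear relation $\sum_{\ell=1}^4\omega_\ell u_\ell\BS{\tilde f}(\BS{z}_\ell)\BS{\tilde f}^\trp(\BS{z}_\ell)=\BS{M}^*$ determines the weights $\omega_\ell^*$ uniquely, and all four are positive because otherwise one of the saturated cases would apply.

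The main obstacle is the algebraic verification in the second step: although no single manipulation is hard, the composition of three coordinate changes ($x_j\mapsto 1/x_j$, the permutation $\BS{Q}$, and the affine map $\BS{B}$) must be tracked carefully so that each compact condition in $\BS{\tilde{\tilde\beta}}$ is correctly rewritten in the original coefficients $\beta_1,\beta_2,\beta_3$ for each of the four excluded vertices; the apparent asymmetry between (i)--(iv) (compare the coefficient $(1/b^2-1/a^2+2/(ab))$ of $\beta_1\beta_2$ in (i) with the coefficient $(1/a^2+1/b^2)$ in (ii) and (iii)) makes this bookkeeping delicate.
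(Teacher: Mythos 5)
Your proposal is correct and follows essentially the same route as the paper: both pass to the intercept model on $\mathcal{Z}=[0,1]^2$ via the composed transformation, restrict to the four vertices by the essentially complete class result, and reduce each saturated case to a single condition at the excluded vertex. The only real differences are that the paper simply cites Theorem 4.2 of Gaffke et al.\ (2018) for the criterion $c_k^{-1}\ge c_h^{-1}+c_i^{-1}+c_j^{-1}$ --- which, after expansion, is exactly your compact condition $\tilde{\tilde{\beta}}_0^{\,2}\le\tilde{\tilde{\beta}}_1\tilde{\tilde{\beta}}_2$ for the excluded vertex $(1,1)$ --- whereas you re-derive it from AM--GM and the equivalence theorem, and that your linear-independence argument for positivity and uniqueness of the four weights in case (v) is more explicit than the paper's brief appeal to Remark \ref{rem-1}.
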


\begin{proof}
The regression vector $\BS{\tilde{f}}_{\BS{\tilde{\tilde{\beta}}}}(\BS{z})$ given by (\ref{eq4-4}) coincides with that for the two-factor gamma model with intercept on $\mathcal{Z}=[0,1]^2$ whose intensity function is defined as $u_{\BS{\tilde{\tilde{\beta}}}}(\BS{z})=(\tilde{\tilde{\beta}}_0+\tilde{\tilde{\beta}}_{1}z_1+\,\tilde{\tilde{\beta}}_2 z_{2})^{-2}$ for all $\BS{z}\in \mathcal{Z}$.
Denote 
\begin{align*}
c_1&=u_{\BS{\tilde{\tilde{\beta}}}}((0,0)^\trp)=\tilde{\tilde{\beta}}_0^{-2}=({\beta_3+\frac{1}{b}(\beta_1+\beta_2}))^{-2},\\
c_2&=u_{\BS{\tilde{\tilde{\beta}}}}((1,0)^\trp)=(\tilde{\tilde{\beta}}_0+\tilde{\tilde{\beta}}_1 )^{-2}=(\beta_3+\beta_1\frac{1}{b}+\beta_2\frac{1}{a})^{-2},\\
c_3&=u_{\BS{\tilde{\tilde{\beta}}}}((0,1)^\trp)=(\tilde{\tilde{\beta}}_0+\tilde{\tilde{\beta}}_2 )^{-2}=(\beta_3+\beta_1\frac{1}{a}+\beta_2\frac{1}{b})^{-2},\\
c_4&=u_{\BS{\tilde{\tilde{\beta}}}}((1,1)^\trp)=(\tilde{\tilde{\beta}}_0+\tilde{\tilde{\beta}}_1+\tilde{\tilde{\beta}}_2 )^{-2}=(\beta_3+\frac{1}{a}(\beta_1+\beta_2))^{-2}.
\end{align*}
Let $h, i, j, k \in \{1,2,3,4\}$ are pairwise distinct  such that $c_k=\min\{c_1,c_2,c_3,c_4\}$ then it follows  from Theorem 4.2 in \cite{Gaffke2018} that if  \ $c_{k}^{-1}\,\ge c_{h}^{-1}+c_{i}^{-1}+c_{j}^{-1}$ \ then  $\xi^*$ is a three-point design supported by the three vertices $\BS{v}_h$, $\BS{v}_i$, $\BS{v}_j$, with equal weights $1/3$. Hence, straightforward computations show that the  condition in case $(i)$ of the corollary  is equivalent to $c_{4}^{-1}\,\ge c_{1}^{-1}+c_{2}^{-1}+c_{3}^{-1}$. Analogous verifying is obtained for other cases. By Remark \ref{rem-1}  the four-point design with positive weights in case $(v)$   applies implicitly if non of the conditions  $(i)$ -- $(iv)$ of saturated designs  is satisfied at a given $\BS{\beta}$. 
\end{proof}

It is noted that, the optimality conditions $(i)$--$(iv)$ provided by Corollary  \ref{theo4-1} depend on  the values of  $a$ and $b$.  The D-optimality  might be achieved or declined  by changing  the values of $a$ and $b$. To see that, more specifically, let $a=1$ and $b=2$, i.e., the experimental region is  $\mathcal{X}=[1,2]^2$ and define $\gamma_1=\beta_1/\beta_3$ and $\gamma_2=\beta_2/\beta_3, \beta_3 \neq 0$. Here, the parameter space   which is depicted in Panel (a) of Figure \ref{fig:opt2} is characterized by $\gamma_2+\gamma_1>-1$, $2\,\gamma_2+\gamma_1>-2$ and $\gamma_2+2\,\gamma_1>-2$.  It is observed that from Panel (a) of  Figure  \ref{fig:opt2} that the design given by part $(i)$ of Corollary  \ref{theo4-1} is not locally D-optimal at any parameter point belongs to the space of model parameters. In other words,  condition $(i)$,  $\frac{1}{4}(\gamma_1^2+\gamma_2^2)+\frac{1}{4}\gamma_1\gamma_2+ \gamma_1+\gamma_2\le-1$, can not be satisfied. \par
Let us consider another experimental region with a higher length  by fixing $a=1$ and taking $b=4$, i.e., $\mathcal{X}=[1,4]^2$.  The parameter space   which is depicted in Panel (b) of Figure \ref{fig:opt2} is characterized by $\gamma_2+\gamma_1>-1$,  $4\,\gamma_2+\gamma_1>-4$ and $\gamma_2+4\,\gamma_1>-4$. In this case all designs given by Corollary \ref{theo4-1} are locally D-optimal at particular values of $\gamma_2$ and $\gamma_1$ as it is observed from the figure.  It is obvious that along the diagonal dashed line ($\gamma_2=\gamma_1$) there exist  at most three different types of locally D-optimal designs.\par
\begin{figure}[H]
    \centering
    \subfloat[]{{\includegraphics[width=7cm, height=6cm]{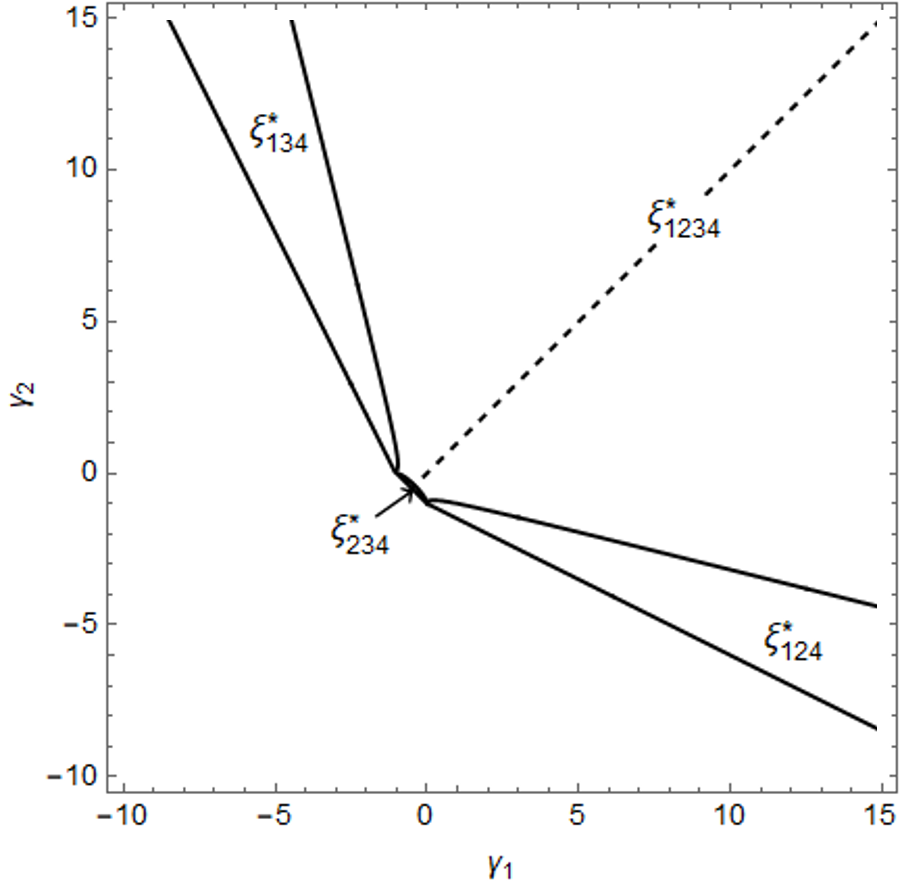} }}
    \qquad
    \subfloat[]{{\includegraphics[width=7cm, height=6cm]{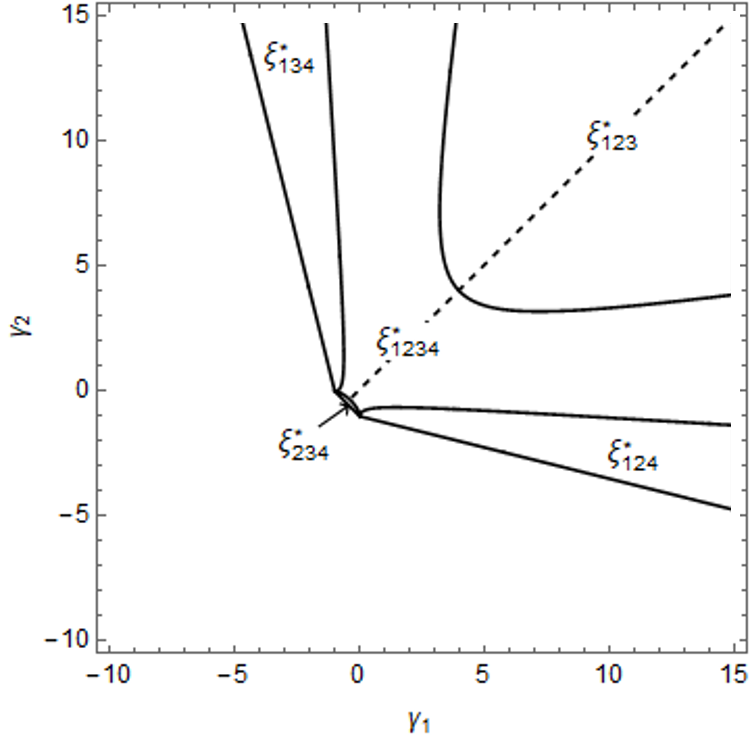} }}
    \caption{Dependence of locally D-optimal designs on $\gamma_1=\beta_1/\beta_3$ and $\gamma_2=\beta_2/\beta_3$ where for Panel (a) $\mathcal{X}=[1,2]^2$ and  for Panel (b) $\mathcal{X}=[1,4]^2$.  The diagonal dashed line represents the case $\gamma_2=\gamma_1$.  Note that 
    $\mathrm{supp}(\xi^*_{ijk})=\{\BS{v}_i,\BS{v}_j,\BS{v}_k\}\subset\{\BS{v}_1,\BS{v}_2,\BS{v}_3,\BS{v}_4\}$ and  $\mathrm{supp}(\xi^*_{1234})=\{\BS{v}_1,\BS{v}_2,\BS{v}_3,\BS{v}_4\}$. }. 
    \label{fig:opt2}
\end{figure}
 For arbitrary values of  $a$ and $b$, $0<a<b$  let us restrict to case $\gamma_2=\gamma_1=\gamma$, i.e., $\beta_1=\beta_2=\beta,\beta_3\neq 0$ and the next corollary is immediate. 
\begin{corollary} \label{lem-int}
Consider $\BS{f}(\BS{x})=(x_1,x_2,x_1x_2)^\trp$ on an arbitrary square $\mathcal{X}=[a,b]^2,\,0<a<b$ in the positive quadrant. Let $\beta_1=\beta_2=\beta$ and $\beta_3\neq 0$. Define   $\gamma=\frac{\beta}{\beta_3}$. Then the locally  D-optimal design $\xi^*$  (at $\BS{\beta}$)  is as follows.
\begin{enumerate}[(i)]
\item If $-\frac{a}{2}<\gamma\le -\frac{ab}{3b-a}$, then $\xi^*$ assigns equal weights $1/3$ to  $\BS{v}_2,\BS{v}_3$, $\BS{v}_4$. 
\item  If  $b-3a>0$ and  $\gamma\ge\frac{ab}{b-3a}$, then $\xi^*$ assigns equal weights $1/3$ to  $\BS{v}_1,\BS{v}_2,\BS{v}_3$. 
\item  If  $b-3a>0$ and $-\frac{ab}{3b-a}<\gamma < \frac{ab}{b-3a}$ then the design $\xi^*$   is supported by $\BS{v}_1$, $\BS{v}_2,\BS{v}_3$, $\BS{v}_4$. 
 The optimal weights are given by 
 \[
\omega_1^*=\frac{ab-(a-3b)\gamma}{4b(a+2\gamma)},\ \ 
\omega_2^*=\omega_3^*=\frac{\big(ab+(a+b)\gamma\big)^2}{4ab(b+2\gamma)(a+2\gamma)},\ \ 
\omega_4^*=\frac{ab-(b-3a)\gamma}{4a(b+2\gamma)}.
\]
\end{enumerate}
 \end{corollary}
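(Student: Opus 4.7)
The plan is to derive the corollary as a specialization of Corollary \ref{theo4-1} to the symmetric setting $\beta_1 = \beta_2 = \beta$, working throughout with the ratio $\gamma = \beta/\beta_3$. Substituting $\beta_1 = \beta_2 = \beta$ into each of conditions (i)--(iv) of Corollary \ref{theo4-1} and dividing by $\beta_3^2$, each condition becomes a quadratic inequality in $\gamma$, whose admissible range must then be intersected with the parameter-space constraint coming from $\BS{f}^\trp(\BS{x})\BS{\beta} > 0$ on $[a,b]^2$.

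First I would show that conditions (ii) and (iii) of Corollary \ref{theo4-1} are vacuous in the symmetric case. They both collapse to the single inequality $2(a^2+b^2)\gamma^2 + 2ab(a+b)\gamma + a^2 b^2 \le 0$, whose discriminant reduces (via the elementary identity $(a+b)^2 - 2(a^2+b^2) = -(a-b)^2$) to a negative quantity, so the supports $\{\BS{v}_1,\BS{v}_2,\BS{v}_4\}$ and $\{\BS{v}_1,\BS{v}_3,\BS{v}_4\}$ are ruled out at the outset.

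Next I would treat condition (iv) of Corollary \ref{theo4-1}, which corresponds to part (i) here. The specialization yields
\[
(3b-a)(a+b)\gamma^2 + 4ab^2\gamma + a^2b^2 \le 0.
\]
The key algebraic observation is $4b^2 - (3b-a)(a+b) = (b-a)^2$, which turns the discriminant into a perfect square and produces the roots $-ab/(a+b)$ and $-ab/(3b-a)$. The parameter-space condition, here equivalent to $\gamma > -a/2$ (using $\beta_3>0$ and minimising $x_1 x_2/(x_1+x_2)$ over $[a,b]^2$), satisfies $-ab/(a+b) < -a/2 < -ab/(3b-a)$, so the admissible portion of the solution interval is exactly $-a/2 < \gamma \le -ab/(3b-a)$, giving part (i). The mirror argument for condition (i) of Corollary \ref{theo4-1} produces the quadratic $(3a-b)(a+b)\gamma^2 + 4a^2 b\gamma + a^2 b^2 \le 0$, whose leading coefficient changes sign at $b=3a$. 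Only when $b>3a$ does the parabola open downward and admit an unbounded solution set, whose intersection with the parameter space reduces to $\gamma \ge ab/(b-3a)$, giving part (ii).

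For part (iii), when neither saturated-design condition holds, Corollary \ref{theo4-1}(v) forces the optimal design to be supported on all four vertices. The symmetry $\beta_1 = \beta_2$ forces $\omega_2^* = \omega_3^*$, leaving two free weights tied by the normalisation $\omega_1^* + 2\omega_2^* + \omega_4^* = 1$; one additional equation from the Kiefer--Wolfowitz equivalence theorem (equality at $\BS{v}_1$, or equivalently at $\BS{v}_4$) pins them down. An alternative and perhaps cleaner route is to push the problem through the transformation (\ref{eq4-5}) to the symmetric two-factor intercept model on $[0,1]^2$, where the explicit four-point weights follow from Theorem 4.2 of \cite{Gaffke2018} and can then be pulled back. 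The main obstacle is this final algebraic reduction: verifying that the closed forms stated in the corollary are indeed produced by this procedure and are positive throughout the $\gamma$-range $(-ab/(3b-a),\,ab/(b-3a))$. Positivity of $\omega_1^*$ and $\omega_4^*$ boils down to controlling the signs of $ab-(a-3b)\gamma$ and $ab-(b-3a)\gamma$, which requires the same $(b-a)^2$ cancellation used in the saturated cases; the positivity of $\omega_2^*$ is immediate as its numerator is a perfect square.
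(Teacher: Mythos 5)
Your proposal is correct and follows essentially the same route as the paper's proof: specialize the conditions of Corollary \ref{theo4-1} to $\beta_1=\beta_2=\beta$, reduce each to a quadratic in $\gamma=\beta/\beta_3$ (the same quadratics, with the same $(b-a)^2$ discriminant cancellation and roots $-ab/(3b-a)$ and $ab/(b-3a)$), rule out the supports $\{\BS{v}_1,\BS{v}_2,\BS{v}_4\}$ and $\{\BS{v}_1,\BS{v}_3,\BS{v}_4\}$, and intersect with the admissible range $\gamma>-a/2$. If anything you are more explicit than the paper on the four-point case, where the paper only checks positivity of the stated weights on the middle interval and invokes the four-point part of Corollary \ref{theo4-1} without deriving the weight formulas.
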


\begin{proof}
Consider the experimental region $\mathcal{X}=[a,b]^2,\,0<a<b$. By  assumption $\beta_1=\beta_2=\beta,\,\beta_3\neq0$ the range of $\gamma=\frac{\beta}{\beta_3}$ is given by $(-a/2,\infty)$. Assumption $b-3a>0$ implies that  $-\frac{a}{2}<-\frac{ab}{3b-a}<\frac{ab}{b-3a}$. Employing  Corollary \ref{theo4-1} shows the following.  Both conditions of parts ($\rmnum{2}$) and ($\rmnum{3}$) of Corollary \ref{theo4-1} are not fulfilled by any parameter point thus the corresponding designs are not D-optimal. In contrast, the design  $\xi^*$ in ($\rmnum{1}$) of  Corollary \ref{lem-int} is locally D-optimal if the condition of  part $(v)$ of Corollary \ref{theo4-1} holds true. That condition is equivalent to 
 \[
  (3b^2+2ab-a^2)\gamma^2+4 ab^2\gamma+a^2b^2 \le0.
  \]
The l.h.s. of above inequality is polynomial in $\gamma$ of degree 2 and  thus the inequality is fulfilled  by $-\frac{a}{2}<\gamma\le -\frac{ab}{3b-a}$.\\*
Again, the design  $\xi^*$ in ($\rmnum{2}$)  is locally D-optimal if  the condition of  part $(i)$ of Corollary \ref{theo4-1} holds true.  That condition is equivalent to 
 \[
  (3a^2+2ab-b^2)\gamma^2+4 a^2b\gamma+a^2b^2 \le0.
  \]
The l.h.s. of above inequality is polynomial in $\gamma$ of degree 2 and  thus the inequality is fulfilled  by $\gamma\ge\frac{ab}{b-3a}$ if  $b-3a>0$.\\* 
The four-point design given   in ($\rmnum{3}$) has  positive  weights on $-\frac{ab}{3b-a}<\gamma < \frac{ab}{b-3a}$ if  $b-3a>0$ and hence it is implicitly locally D-optimal by  Remark \ref{rem-1}.   
  \end{proof}
\begin{remark} One should note that from Corollary \ref{lem-int} when $\beta=0$ the uniform design on the vertices $\BS{v}_1, \BS{v}_2, \BS{v}_3, \BS{v}_4$ is locally D-optimal.
\end{remark}
\section{Design efficiency}

The D-optimal design for gamma models depends on a given value of the parameter $\BS{\beta}$.  Misspecified values may lead to a poor performance of the locally optimal design. From our results the  designs are locally D-optimal at a specific subregion of the parameter space.  In this section we  discuss  the potential benefits of the derived designs, in particular, the D-optimal designs from Theorem  \ref{The5-5} for a gamma model without interaction and from Corollary \ref{lem-int} for a gamma model with interaction. Our objective is to examine the overall performance of some of  the locally D-optimal designs. The overall performance of any design $\xi$ is described by its D-efficiencies, as a function of $\BS{\beta}$,  
\begin{eqnarray}
\mathrm{Eff}(\xi,\BS{\beta})=\left(\frac{\det\BS{M}(\xi, \BS{\beta})}{\det\BS{M}(\xi_{\BS{\beta}}^*,\BS{\beta})}\right)^{1/3} \label{eq6.1}
\end{eqnarray}
where $\xi_{\BS{\beta}}^*$ denotes the locally D-optimal design at $\BS{\beta}$.

\hspace{-4ex} {\bf Example 1.} In the situation of  Theorem  \ref{The5-5} the experimental region is given by $\mathcal{X}=[1,2]^3$. We restrict only to the case $\beta_1>0$, $\beta_2=\beta_3=\beta$ and hence we utilize the ratio $\gamma=\beta/\beta_1$ with range $(-1/4,\infty)$. 
Our interest is in the saturated and equally weighted designs  $\xi_1$ and $\xi_2$ where $\mathrm{supp}(\xi_1)=\{\BS{v}_2,\BS{v}_3,\BS{v}_4\}$ and  $\mathrm{supp}(\xi_2)=\{\BS{v}_3,\BS{v}_4,\BS{v}_5\}$  which by Theorem \ref{The5-5}  are locally D-optimal at  $\gamma \ge 1/5$ and  $\gamma \in (-1/4,-5/23]$, respectively. In particular, $\xi_1$ and  $\xi_2$ are robust against misspecified parameter values in their respective subregions. Additionally, for $\gamma\in(-5/23,1/5)$ we consider the locally D-optimal designs of type $\xi_3(\gamma)$  given by the theorem. Note that  $\mathrm{supp}(\xi_3(\gamma))=\{\BS{v}_2,\BS{v}_3,\BS{v}_4,\BS{v}_5\}$ and the weights depend on $\gamma$.  \par
   
To employ (\ref{eq6.1}) we put  $\xi_{\BS{\beta}}^*=\xi_1$ if $\gamma \ge 1/5$, $\xi_{\BS{\beta}}^*=\xi_2$ if $\gamma \in (-1/4,-5/23]$ and $\xi_{\BS{\beta}}^*=\xi_3(\gamma)$ if $\gamma\in(-5/23,1/5)$. We select for examination the designs $\xi_1$, $\xi_2$, $\xi_3(-1/7)$. Moreover, as  natural competitors we select various uniform designs supported by specific vertices. That is $\xi_4$ with support $\{1,2\}^3$ and the two half-fractional designs $\xi_5$ and $\xi_6$ supported by $\{\BS{v}_1,\BS{v}_5,\BS{v}_6,\BS{v}_7\}$  and   $\{\BS{v}_2,\BS{v}_3,\BS{v}_4,\BS{v}_8\}$, respectively. Additionally, we consider  $\xi_7$ which assigns uniform weights to the grid $\{1,1.5,2\}^3$.

In Panel (a) of Figure \ref{fig:1}, the D-efficiencies of the  designs $\xi_1$, $\xi_2$, $\xi_3(-1/7)$, $\xi_4$, $\xi_5$, $\xi_6$ and $\xi_7$  are depicted.  The efficiencies of $\xi_1$ and $\xi_2$ are, of course, equal to $1$ in their optimality subregions $\gamma \in [1/5,\infty)$ and $\gamma \in (-1/4,-5/23]$, respectively. However, for $\gamma$ outside but  fairly close to the respective  optimality subregion both designs perform quite well; the efficiencies of $\xi_1$ and $\xi_2$ are greater than $0.80$ for $-0.15\le \gamma <1/5$ and $-1/4< \gamma \le-0.28$, respectively. However, their efficiencies decrease towards zero when $\gamma$ moves away from the respective  optimality subregion. So, the overall performance of $\xi_1$ and $\xi_2$ cannot be regarded as satisfactory. The design $\xi_{3}(-1/7)$, though locally D-optimal only at $\gamma=-1/7$, does show a more satisfactory overall performance with efficiencies range between $0.8585$ and $1$. The   efficiencies of  the half-fractional design $\xi_6$ are greater than $0.80$ only for $\gamma>-0.049$, otherwise the  efficiencies decrease towards zero. The  design $\xi_4$  turns out to be uniformly worse than $\xi_{3}(-1/7)$ and its efficiencies range between $0.5768$ and $0.7615$. The worst performance is shown by the designs $\xi_5$ and $\xi_7$.\\*

\hspace{-4ex} {\bf Example 2.} In the situation of  Corollary \ref{lem-int} we consider the experimental region  $\mathcal{X}=[1,4]^2$ where condition $b-3a>0$ is satisfied. The vertices are denoted by   $\BS{v}_1=\big(4,4\big)^\trp$, $\BS{v}_2=\big(4,1\big)^\trp$,  $\BS{v}_3=\big(1,4\big)^\trp$, $\BS{v}_4=\big(1,1\big)^\trp$. We restrict to $\beta_3 \neq 0$, $\beta_1=\beta_2=\beta$, and the range of $\gamma=\beta/\beta_3$ is $ (-1/2, \infty)$. In analogy to {\bf Example 1}  denote by $\xi_1$ and $\xi_2$ the saturated and equally weighted designs with support $\{\BS{v}_1,\BS{v}_2,\BS{v}_3\}$ and $\{\BS{v}_2,\BS{v}_3,\BS{v}_4\}$, respectively. By the corollary $\xi_1$ and $\xi_2$  are locally D-optimal at $\gamma \ge4$ and $\gamma \in (-1/2,-4/11]$, restrictively. Denote by $\xi_3(\gamma)$ the design given in part $(ii)$ of  Corollary \ref{lem-int} which is locally D-optimal at $\gamma \in (-4/11, 4)$. Note that  from (\ref{eq6.1}) we put  $\xi_{\BS{\beta}}^*=\xi_1$ if $\gamma \ge 4$, $\xi_{\BS{\beta}}^*=\xi_2$ if $\gamma \in (-1/2,-4/11]$ and $\xi_{\BS{\beta}}^*=\xi_3(\gamma)$ if $\gamma\in (-4/11, 4)$.  For examination we select $\xi_1$, $\xi_2$, $\xi_3(0)$. As a natural competitor we select $\xi_4$ that assigns uniform weights to the grid $\{1,2.5,4\}^2$. The efficiencies  are depicted in Panel (b) of Figure \ref{fig:1}. We observe that the performance of  $\xi_1$ and $\xi_2$ is similar to that of the corresponding designs in {\bf Example 1}.  Moreover, the design $\xi(0)$ show a more satisfactory overall performance. The efficiencies of $\xi_4$ vary between $0.77$ and $0.83$ for $\gamma > -4/11$.   

\begin{figure}[H]
    \centering
    \subfloat[{\bf Example 1}. The considered interval is $-1/4<\gamma \le 1$.]{{\includegraphics[width=7cm, height=6cm]{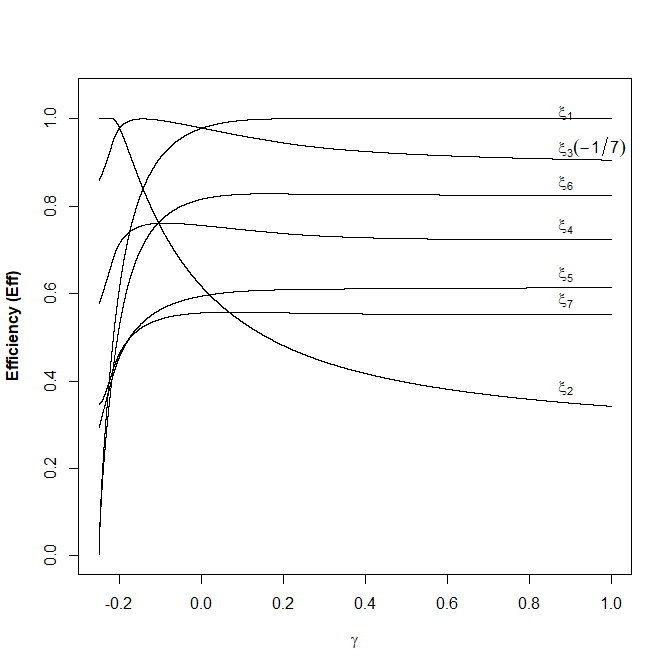} }}
    \qquad
    \subfloat[{\bf Example 2}. The considered interval is $-1/2<\gamma \le 5$.]{{\includegraphics[width=7cm, height=6cm]{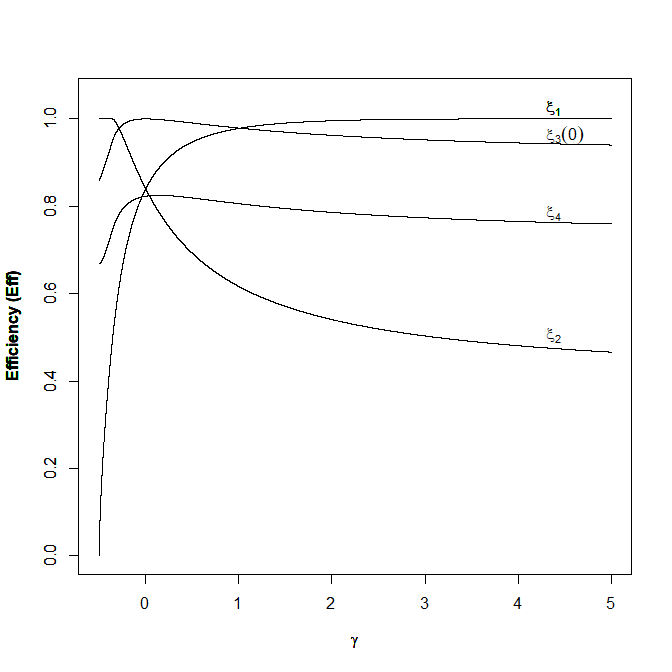} }}
    \caption{ D-efficiencies from (\ref{eq6.1}) of particular designs relative to the optimal designs at their optimality subregions under gamma models without intercept.}%
    \label{fig:1}
\end{figure}

\section{Conclusion}
In the current paper we considered gamma models without intercept for which locally D- and A-optimal designs have been developed. The positivity of the expected means entails a positive linear predictor whereas absence of the intercept term requires an experimental region  which is not containing the origin point $\BS{0}$.   The information matrix for the non-intercept gamma model is invariant w.r.t. simultaneously scaling of $\BS{x}$ or $\BS{\beta}$. In this context, we utilized different approaches to derive the locally optimal designs. Sets of D- and A-optimal designs were derived on a non-compact experimental region. On the other hand,  a transformation to models that are having intercept were employed  on  a two-factor model without or with interaction as in Theorem \ref{The3-2} or Corollary \ref{theo4-1}, respectively.   This approach simplified the optimality problem and thus such known results were applied.  Moreover, the complexity of applying The  Equivalence Theorem as in Theorem  \ref{The5-5}  implicated the optimality  problem to solve a system of inequalities analytically or by employing  computer algebra. In contrast, the transformation approach, of course, can be used for the case in Theorem \ref{The5-5} and thus  according to  Remark \ref{rem1}, the three-factor model without intercept on $\mathcal{X}=[1,2]^2$ can be transformed to a model with intercept on\, $\mathcal{T}={\rm Conv}\bigl\{(1/2,1)^\trp,(1,1/2)^\trp,(1/2,1/2)^\trp,(2,1)^\trp,(1,2)^\trp,(2,2)^\trp\bigr\}$. Rescaling $\mathcal{T}$ yields  $\mathcal{Z}={\rm Conv}\bigl\{ (0,1/3)^\trp,(1/3,0)^\trp,(0,0)^\trp,(1,1/3)^\trp,(1/3,1)^\trp,(1,1)^\trp \bigr\}$.  Consequently, the linear predictor is  reparameterized  as $\tilde \beta_0+\tilde\beta_{1} z_1+\tilde\beta_{2} z_2$ where  $(z_1,z_2)^\trp\in {\mathcal{Z}}$ and   $\tilde \beta_0=\beta_1+(1/2)(\beta_2+\beta_3)$,  $\tilde\beta_{1}=(3/2)\beta_2$, $\tilde\beta_{2}=(3/2)\beta_3$.\par

In many applied aspects, the log-link function is considered as a main alternative to the canonical one (see \cite{Kilian2002ACO},\cite{wenig2009costs},\cite{10.1093/intqhc/mzr010},\cite{McCrone2005},\cite{Montez-Rath2006}).  In that case the intensity function $u(\BS{x},\BS{\beta})=1$ and thus the information matrix under gamma models is equivalent to that under ordinary regression models. For that reason, the optimal designs for a gamma model are identical to those for an ordinary regression model with similar linear predictor.  In  \cite{hardin2018generalized} gamma models were fitted considering various link functions, for example;  the Box-Cox family of link functions that is given by 
\begin{equation}
\BS{f}^\trp(\BS{x})\BS{\beta}=\left\{\begin{array}{ll} \bigl(\mu^{\lambda}-1\bigr)/\lambda  & (\lambda \neq 0)  \\ \log \mu & (\lambda=0) \end{array}\right.
\end{equation}
which involves the log-link  at $\lambda = 0$ (see \cite{atkinson2015designs}). The intensity function is thus defined  as 
\begin{equation}
u(\BS{x},\lambda\BS{\beta})=\big(\lambda \BS{f}^\trp(\BS{x})\BS{\beta}+1\big)^{-2}, \BS{x}\in \mathcal{X}.
\end{equation}
Here, the positivity condition (\ref{eq2-2}) of the expected mean $\mu=E(y)$ of a gamma distribution is modified to $\lambda \BS{f}^\trp(\BS{x})\BS{\beta}>-1$ for all $\BS{x}\in \mathcal{X}$. Therefore,  for a gamma model without intercept the experimental region might be considered as $ \mathcal{X}=[0,1]^\nu$. As an example,  consider $\BS{f}(\BS{x})=(x_1,x_2)^\trp$ on  $\mathcal{X}=[0,1]^2$ with  vertices  $\BS{v}_1=(0,0)^\trp$, $\BS{v}_2=(1,0)^\trp$, $\BS{v}_3=(0,1)^\trp$, $\BS{v}_4=(1,1)^\trp$.  Let $u_{k}=u(\BS{v}_k, \lambda\BS{\beta})$ for all $(1 \le k \le 4)$. The Equivalence Theorem (Theorem \ref{theo2-1}, part (a)) approves the D-optimality of the design $\xi^*$ which assigns equal weights $1/2$ to the vertices $\BS{v}_2$ and $\BS{v}_3$  at the point  $\lambda \BS{\beta}$. This result might be extended for  a multiple-factor model  as in Theorem  \ref{theo5.0.1}. However, the expression $\lambda \BS{f}^\trp(\BS{x})\BS{\beta}+1$  could be viewed as  a linear predictor of a gamma model with known intercept.  Adopting the Box-Cox family as a class of link functions for gamma models could be a topic of future research.

\end{document}